\newtheorem{definition}{{\sc Definition}\sc}[section]
\newcommand{\bdefi}{\begin{definition}}
\newcommand{\edefi}{\end{definition}}
\newtheorem{appropr}[definition]{{\sc Approximation Procedure}\sc}
\newcommand{\bappr}{\begin{appropr}}
\newcommand{\eappr}{\end{appropr}}
\newtheorem{bedi}[definition]{{\sc Condition}\sc}
\newcommand{\bbd}{\begin{bedi}}
\newcommand{\ebd}{\end{bedi}}
\newtheorem{bedin}[definition]{{\sc Conditions}\sc}
\newcommand{\bbdn}{\begin{bedin}}
\newcommand{\ebdn}{\end{bedin}}
\newtheorem{corollary}[definition]{{\sc Corollary}\sc}
\newcommand{\bco}{\begin{corollary}}
\newcommand{\eco}{\end{corollary}}
\newtheorem{lemma}[definition]{{\sc Lemma}\sc}
\newcommand{\blem}{\begin{lemma}}
\newcommand{\elem}{\end{lemma}}
\newtheorem{proposition}[definition]{{\sc Proposition}\sc}
\newcommand{\bpro}{\begin{proposition}}
\newcommand{\epro}{\end{proposition}}
\newtheorem{satz}[definition]{{\sc Theorem}\sc}
\newcommand{\bsa}{\begin{satz}}
\newcommand{\esa}{\end{satz}}
\newtheorem{theorem}[definition]{{\sc Theorem}\sc}
\newcommand{\bth}{\begin{theorem}}
\newtheorem{assumption}[definition]{{\sc Assumption}\sc}
\newcommand{\bas}{\begin{assumption}}
\newcommand{\eas}{\end{assumption}}
\newtheorem{assumptions}[definition]{{\sc Assumptions}\sc}
\newcommand{\bass}{\begin{assumptions}}
\newcommand{\eass}{\end{assumptions}}
\newtheorem{abb}{{\sc Figure}\sc}
\newcommand{\babb}{\begin{abb}}
\newcommand{\eabb}{\end{abb}}
\newenvironment{remark}{\begin{rmk}\sl}{\end{rmk}}
\newtheorem{rmk}{{\sc Remark}\sc}[section]
\newcommand{\brem}{\begin{remark}}
\newcommand{\erem}{\end{remark}}
\newenvironment{remarks}{\begin{rmks}\sl}{\end{rmks}}
\newtheorem{rmks}{{\sc Remarks}\sc}[section]
\newcommand{\brems}{\begin{remarks}}
\newcommand{\erems}{\end{remarks}}
\newenvironment{example}{\begin{exmp}\rm}{\end{exmp}}
\newtheorem{exmp}{{\sc Example}\sc}[section]
\newcommand{\bbsp}{\begin{example}}
\newcommand{\ebsp}{\end{example}}
\newcommand{\bexa}{\begin{example}}
\newcommand{\eexa}{\end{example}}
\newtheorem{model}{{\sc Model}\sc}[section]
\newcommand{\bmdl}{\begin{model}}
\newcommand{\emdl}{\end{model}}
\newtheorem{scheme}{{\sc Scheme}\sc}[section]
\newcommand{\bscm}{\begin{scheme}}
\newcommand{\escm}{\end{scheme}}
\newenvironment{tabelle}{\begin{tabl}\rm}{\end{tabl}}
\newtheorem{tabl}{{\bf Table}}
\newcommand{\btab}{\begin{tabelle}}
\newcommand{\etab}{\end{tabelle}}
\newenvironment{exercise}{\begin{exc}\sl}{\end{exc}}
\newtheorem{exc}{Exercise}[section]
\newcommand{\bexe}{\begin{exercise}}
\newcommand{\eexe}{\end{exercise}}
\newcommand{\bay}{\begin{array}}
\newcommand{\eay}{\end{array}}
\newcommand{\bqa}{\begin{eqnarray*}}
\newcommand{\eqa}{\end{eqnarray*}}
\newcommand{\bee}{\begin{eqnarray*}}
\newcommand{\eee}{\end{eqnarray*}}
\newcommand{\bea}{\begin{eqnarray*}}
\newcommand{\eea}{\end{eqnarray*}}
\newcommand{\bqan}{\begin{eqnarray}}
\newcommand{\eqan}{\end{eqnarray}}
\newcommand{\be}{\begin{eqnarray}}
\newcommand{\ee}{\end{eqnarray}}
\newcommand{\bit}{\begin{itemize}}
\newcommand{\eit}{\end{itemize}}
\newcommand{\ben}{\begin{enumerate}}
\newcommand{\een}{\end{enumerate}}
\newcommand{\beq}{\begin{equation}}
\newcommand{\eeq}{\end{equation}}
\newcommand{\bdes}{\begin{description}}
\newcommand{\edes}{\end{description}}
\newcommand{\btb}{\begin{tabular}}
\newcommand{\etb}{\end{tabular}}
\newcommand{\bcen}{\begin{center}}
\newcommand{\ecen}{\end{center}}
\newcommand{\bmp}{\begin{minipage}}
\newcommand{\emp}{\end{minipage}}
\newtheorem{thm}{Theorem}
\newtheorem{rem}{Remark}
\newtheorem{cor}{Corollary}
\newcommand{\rnc}{\renewcommand}
\newcommand{\nc}{\newcommand}
\newcommand{\mrm}{\mathrm}
\renewcommand{\b}{\textbf}
\nc{\leb}{\lambda \! \! \lambda}
\nc{\mb}{\mathbb}
\nc{\mac}{\mathcal}
\nc{\E}{\mb{E}}
\nc{\N}{\mb{N}}
\nc{\R}{\mb{R}}
\nc{\Q}{\mb{Q}}
\rnc{\P}{\mrm P}
\rnc{\d}{\mrm d}
\nc{\C}{\mac{C}}
\nc{\D}{\mac{D}}
\nc{\B}{\mac{B}}
\nc{\oPo}{\stackrel{p}{\longrightarrow}}
\nc{\oWo}{\stackrel{w}{\longrightarrow}}
\nc{\oDo}{\stackrel{d}{\longrightarrow}}
\nc{\gDg}{\stackrel{d}{=}}
\numberwithin{equation}{section}
\begin{document}

\title{\Large \bf Approximative Tests for the Equality of\\ Two Cumulative Incidence Functions of a Competing Risk}
\author{Dennis Dobler$^*$ and Markus Pauly$^*$ 
}
\maketitle

\noindent${}^*$ {University of Ulm, Institute of Statistics, Germany, \\
\mbox{ }\hspace{1 ex}
email: dennis.dobler@uni-ulm.de  (corresponding author) \\
\mbox{ }\hspace{1 ex}
email: markus.pauly@uni-ulm.de}


\begin{abstract}
In the context of a competing risks set-up we discuss different inference procedures for testing equality of two cumulative incidence functions, where the data may be subject to
independent right-censoring or left-truncation.
To this end, we compare two-sample Kolmogorov-Smirnov- and Cram\'er-von Mises-type test statistics. 
Since, in general, their corresponding asymptotic limit distributions depend on unknown quantities,
we utilize wild bootstrap resampling as well as approximation techniques to construct adequate test decisions. 
Here the latter procedures are motivated from testing procedures for heteroscedastic factorial designs 
but have not yet been proposed in the survival context. 
A simulation study shows the performance of all considered tests under various settings {\color{black} and finally a real data example 
about bloodstream infection during neutropenia is used to illustrate their application.}
\end{abstract}

\noindent{\bf Keywords:} Aalen-Johansen Estimator; Approximation Techniques; Wild Bootstrap; Competing Risk; Counting Processes; Cumulative Incidence Function; Left-Truncation;  Right-Censoring.

 \newpage

%
\section{Introduction}
%

We study non-parametric inference procedures for testing equality of cumulative incidence functions (CIFs) of a competing risk in an independent two-sample set-up. 
Typically, time-simultaneous inference for a CIF is based on the Aalen-Johansen estimator (AJE); see Aalen and Johansen (1978). \nocite{AalenJoh78}
However, due to its complicated limit distribution, additional techniques are needed to gain AJE-based inference methods. 
For example, when constructing simultaneous confidence bands for a CIF, this is often attacked by means of Lin's resampling method; see Lin et al. (1993), Lin (1997) or the monograph of Martinussen and Scheike (2006). 
\nocite{lin93}\nocite{lin97}\nocite{martinussen06}

Recently, it has been seen that his technique is a special example of
the general wild bootstrap; see Cai et al. (2010), Elgmati et al. (2010) or Beyersmann et al. (2013).
\nocite{cai10}\nocite{elgmati10}\nocite{beyersmann13}
Moreover, weak convergence of the wild bootstrap and of other weighted as well as data-dependent bootstrap versions of the AJE have been rigorously studied in 
Beyersmann et al. (2013) as well as in Dobler and Pauly (2014) and Dobler et al. (2015).\nocite{dobler15b}\nocite{dobler14}
As pointed out in Bajorunaite and Klein (2007, 2008)\nocite{bajorunaite07}\nocite{bajorunaite08}, Sankaran et al. (2010),\nocite{sankaran10}
and Dobler and Pauly (2014), Lin's resampling scheme as well as the more general wild bootstrap can also be applied for two-sample problems concerning CIFs. 
In particular, the aforementioned papers discuss different wild bootstrap-based tests 
for ordered and/or equal CIFs. 
However, especially the simulation studies in Bajorunaite and Klein (2007) show that, e.g., Kolomogorov-Smirnov-type tests based on Lin's wild bootstrap may be extremely liberal for small sample sizes. 

To overcome this problem, we study additional testing procedures. In particular, we utilize 
several approximation techniques which have been independently developed for constructing conservative tests for heteroscedastic 
factorial designs; see e.g., the generalized Welch-James test (Johansen, 1980), the ANOVA-type statistic suggested by Brunner et al.
(1997), or the approximate degree of freedom test by Zhang (2012). \nocite{Johansen80, zhang12, BDM97}
There the main idea is to approximate the limit distribution of underlying quadratic forms (which is mostly of weighted $\chi_1^2$-form) by adequate transformations of 
$\chi^2_f$-distributions with estimated degrees of freedom. For example, the famous Box approximation, see Box (1954)\nocite{box54}, is obtained by matching expectation and variance of the statistic 
with a scaled $g\chi_f^2$-distribution. Moreover, additionally matching its skewness, the Pearson approximation is obtained, see Pearson (1959) or Pauly et al. (2013). \nocite{Pearson, PEB13}
In the current paper we apply this approach to two-sample Cram\'er-von Mises-type statistics in AJEs. 
We like to point out that all procedures are motivated from competing risks designs with independent left-truncation and right-censoring but can also be constructed  
for more general counting processes satisfying the multiplicative intensity model.

The paper is organized as follows. The statistical model, the considered estimators and their large sample behaviour are introduced in Section~2. 
In Section 3 we present different test statistics as functionals of these estimators, where we distinguish between bootstrap-based and approximative tests.
Their finite sample properties are investigated in a simulation study given in Section 4. 
{\color{black}The developed theory is then applied to a data-set from the ONKO-KISS study about bloodstream infection (BSI) during neutropenia from Dettenkofer et al. 
(2005)~\nocite{dettenkofer05} in Section~\ref{sec: data_ex}, supplementing existing analyses (see, e.g., Beyersmann et al., 2007, 
\nocite{beyersmann2007competing} or Meyer et al., 2007) \nocite{meyer2007risk} with respect to significance testing.}
Occurrence of a BSI during neutropenia, end of neutropenia and death without a preceding BSI induce a competing risks situation
where observation of the eventual outcome may be right-censored.
Finally, we give some concluding remarks in Section 5. All proofs
and further simulation results are deferred to the Appendix.

%
\section{Notation, Model and Estimators}\label{sec: model}
%

Let $X = (X(t))_{t \geq 0}$ be a right-continuous stochastic process with left-hand limits 
and values in a finite state space, $\{0, 1, \dots, m\}, m \geq 2$.
$X$ is called a competing risks process with $m$ competing risks and initial state 0
if $P(X(0) = 0) = 1$ and
if, for all $s \leq t$, the transition probabilities are given as 
$P(X(t) = j \; | \; X(s) = j) = 1, 1 \leq j \leq m$.
That is, each of the states $1,\dots,m$ is absorbing,
in which case $X$ is simply a time-(in)homogeneous Markov process.
From a medical point of view, $X$ may be interpreted as the health status over time of a diseased individual 
who can experience one out of several causes of death.
For ease of notation, we let $X$ henceforth be a competing risks process with $m=2$ absorbing states.
The case of a general number of risks can be dealt with in the same manner.

The event time of $X$ defined as $T = \inf\{ t > 0 : X(t) \neq 0 \}$
is supposedly finite with probability 1.
Therefore, $X(T) \in \{1,2\}$ and $X(T-) = 0$ where the minus indicates the left-hand limit.
Modeling of the specific risks is done via the cause-specific hazard intensities
\begin{align*}
  \alpha_j(t) = \lim_{\delta \downarrow 0} \frac{1}{\delta} P(T \in [t, t + \delta), X(T) = j \; | \; T \geq t), 
  \quad j=1,2,
\end{align*}
which are assumed to exist.
Moreover, $\tau = \sup \{ t \geq 0 : \int_0^t (\alpha_1 + \alpha_2) (s) \d s < \infty \} \in [0, \infty]$
is the endpoint of any possible observation.
With these definitions, we call
\begin{align}
  F_j(t) = P(T \leq t, X(T) = j) = \int_0^t P(T > s -) \alpha_j(s) \d s, \quad j=1,2,
\end{align}
the cumulative incidence functions (CIFs) for causes $j=1,2$
which are zero at time zero, continuous and non-decreasing. For future abbreviations, we also introduce $S_j(t) = 1 - F_j(t)$ 
as the probability not to die of cause $j=1,2$ until time $t$. Some authors also refer to CIFs as sub-distribution functions; 
see, e.g., Gray (1988) \nocite{Gray_1988} or Beyersmann et al. (2012)\nocite{beyersmann12a} for a textbook giving the preceding definitions. For the modeling of CIFs in related (e.g., regression) problems 
we refer to the review papers by Zhang et al. (2008) \nocite{zhang08} and Latouche (2010). \nocite{Latouche10}

Now consider $n$ independent copies of $X$ which may be interpreted as observing $n$ individuals under study. 
Since these processes are not always fully observable, the following counting processes are a necessity for stating proper estimators for $F_j$:
\begin{align*}
 Y_i(t) & = \b 1 \{ \text{ subject i is observed to be in state } 0  \text{ at time } t- \} \\
 N_{j;i}(t) & = \b 1 \{  \text{ subject i has an observed } (0 \rightarrow j) \text{-transition in } [0,t] \},
\end{align*}
$j=1,2, i =1,\dots,n,$ where $\bf 1\{ \cdot \}$ denotes the indicator function.
Hence, let $Y = \sum_{i = 1}^n Y_i$ be the number at risk process
and let the counting process $N_j = \sum_{i = 1}^n N_{j;i}$ count the total number of observed $(0 \rightarrow j)$-transitions.
Further, we suppose that the so-called multiplicative intensity model holds, that is,
$Y \alpha_j$ is the intensity process of $N_j$,
so that 
\begin{align*}
  M_j(t) = \sum_{i=1}^n M_{j;i}(t) = \sum_{i=1}^n \Big( N_{j;i}(t) - \int_0^t Y_i(s) \alpha_j(s) \d s \Big) 
    = N_j(t) - \int_0^t Y(s) \alpha_j(s) \d s
\end{align*}
are local martingales for $j=1,2$. 
For a specification of the associated filtration, we refer to Andersen et al. (1993)\nocite{abgk93}.
Therein, it is also pointed out that, amongst others, the case of left-truncated and right-censored observations satisfies the required multiplicative intensity model;
see Chapter~III and IV in this monograph for these and other models for incomplete data.

In the present context of competing risks, the Aalen-Johansen estimator for the transition probability matrix of Markov processes collapses to 
an estimator for CIFs given as
\begin{align*}
 \hat F_j(t) = \int_0^t \hat P(T > s-) \frac{\d N_j(s)}{Y(s)} ,
\end{align*}
where $\hat P(T > s)$ denotes the Kaplan-Meier estimator for the probability of surviving until time $s$ and the integrand is set to be zero in case $Y(s)=0$. 
Under the assumption that there exists a function $y: [0,t] \rightarrow [0,1]$
satisfying the convergence in probability
\begin{align}
\label{eq:at_risk_conv}
 \sup_{s \in [0,t]} \Big| \frac{Y(s)}{n} - y(s) \Big| \oPo 0,
\end{align}
where $\inf_{s \in [0,t]} y(s) > 0$,
it is seen that the Aalen-Johansen estimator is consistent as well as asymptotically Gaussian.
That is, even weak convergence on the Skorohod space $\D[0,t]$ holds true; 
see, e.g., Section IV.4 in Andersen et al. (1993) or Beyersmann et al. (2013).\nocite{beyersmann13} 
For completeness, we summarize this result.
%
\begin{thm}[Aalen and Johansen, 1978\nocite{AalenJoh78}]
 Let $t < \tau$ and suppose~\eqref{eq:at_risk_conv} holds. 
 Then, as $n \rightarrow \infty$, convergence in distribution
 \begin{align*}
  W_n = \sqrt{n} ( \hat F_1 - F_1 ) \oDo U
 \end{align*}
 holds on the Skorohod space $\D[0,t]$
 where $U$ is a time-continuous, zero-mean Gaussian process with covariance function
 \begin{align}
  \label{eq: covU}\zeta_U(s_1,s_2) = & \int_0^{s_1 \wedge s_2} \{ S_2(u) - F_1(s_1) \}\{ S_2(u) - F_1(s_2) \} \frac{ \alpha_1(u) }{y(u)} \d u \\
    & \quad + \int_0^{s_1 \wedge s_2} \{ F_1(u) - F_1(s_1) \}\{ F_1(u) - F_1(s_2) \} \frac{ \alpha_2(u) }{y(u)} \d u .\nonumber
 \end{align}
\end{thm}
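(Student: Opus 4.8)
The plan is to linearise $W_n$ into a sum of stochastic integrals with respect to the fundamental martingales $M_1,M_2$ and then to invoke Rebolledo's martingale central limit theorem together with the continuous mapping theorem on the Skorohod space $\D[0,t]$.

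First I would use the multiplicative intensity relation $\d N_j = Y\alpha_j\,\d u + \d M_j$ to split the error. Writing the overall survival function as $S(u)=P(T>u)=1-F_1(u)-F_2(u)$ and noting $\hat S(u-)=\hat P(T>u-)$, one obtains for $s\in[0,t]$
\begin{align*}
 \hat F_1(s)-F_1(s) = \int_0^s \big(\hat S(u-)-S(u-)\big)\alpha_1(u)\,\d u + \int_0^s \frac{\hat S(u-)}{Y(u)}\,\d M_1(u).
\end{align*}
Into the first term I would substitute the Duhamel/Kaplan--Meier representation
\begin{align*}
 \hat S(u-)-S(u-) = -\,S(u-)\int_0^{u-}\frac{\hat S(v-)}{S(v)}\,\frac{\d (M_1+M_2)(v)}{Y(v)},
\end{align*}
replace $\hat S/S$ by $1$ and $Y/n$ by $y$ (justified below), and interchange the order of integration via Fubini using $\int_v^s S(u-)\alpha_1(u)\,\d u=F_1(s)-F_1(v)$. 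Since $S(u-)+F_1(u)=1-F_2(u)=S_2(u)$ by continuity of the CIFs, this yields
\begin{align*}
 W_n(s) = \int_0^s \big(S_2(u)-F_1(s)\big)\frac{\sqrt n}{Y(u)}\,\d M_1(u) + \int_0^s \big(F_1(u)-F_1(s)\big)\frac{\sqrt n}{Y(u)}\,\d M_2(u) + o_P(1),
\end{align*}
uniformly in $s\in[0,t]$.

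Because the integrands still depend on the evaluation point $s$ through $F_1(s)$, $W_n$ is not itself a martingale in $s$; to obtain process convergence I would track the three genuine martingales
\begin{align*}
 L_n^{(1)}(s)=\int_0^s S_2(u)\tfrac{\sqrt n}{Y}\,\d M_1,\quad L_n^{(2)}(s)=\int_0^s F_1(u)\tfrac{\sqrt n}{Y}\,\d M_2,\quad L_n^{(3)}(s)=\int_0^s \tfrac{\sqrt n}{Y}\,\d (M_1+M_2),
\end{align*}
so that $W_n=L_n^{(1)}+L_n^{(2)}-F_1\,L_n^{(3)}+o_P(1)$. Applying the multivariate Rebolledo theorem to $(L_n^{(1)},L_n^{(2)},L_n^{(3)})$ --- whose predictable variations converge by~\eqref{eq:at_risk_conv} and whose jumps are $O_P(n^{-1/2})$, so the Lindeberg condition holds --- gives joint weak convergence on $\D[0,t]$ to a continuous Gaussian vector martingale. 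The continuous mapping theorem then delivers $W_n\oDo U$ with $U$ zero-mean Gaussian, since the limit paths are continuous and $(g_1,g_2,g_3)\mapsto g_1+g_2-F_1 g_3$ is continuous on $\D[0,t]^3$ at such triples ($F_1$ being continuous). Crucially $\langle M_1,M_2\rangle=0$, because a subject cannot experience a $0\to1$ and a $0\to2$ transition simultaneously, so $N_1$ and $N_2$ have no common jumps; hence the limit can be written as $U(s)=\int_0^s (S_2(u)-F_1(s))\,\d B_1(u)+\int_0^s (F_1(u)-F_1(s))\,\d B_2(u)$ with independent Gaussian martingales $B_1,B_2$ of variance measures $\d\langle B_j\rangle(u)=\alpha_j(u)/y(u)\,\d u$. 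The asserted covariance $\zeta_U(s_1,s_2)$ is then read off by the It\^o isometry, the independence of $B_1,B_2$ annihilating any cross term and leaving exactly the two displayed integrals over $[0,s_1\wedge s_2]$.

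The main obstacle is the uniform control of the $o_P(1)$ remainder, that is, justifying the linearisation uniformly over $[0,t]$: replacing $\hat S$ by $S$ and $Y/n$ by $y$, and discarding the quadratic Duhamel remainder. These steps rest on the uniform consistency of the Kaplan--Meier estimator and on the hypotheses $t<\tau$ and $\inf_{[0,t]}y>0$, which keep the relevant hazards integrable and bound $\sqrt n/Y$ in probability; together with the martingale CLT these also secure tightness on $\D[0,t]$. Verifying that these remainders vanish in the sup-norm, rather than merely pointwise, is the genuinely technical part of the argument.
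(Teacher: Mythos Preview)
The paper does not give its own proof of this theorem; it is quoted as a classical result of Aalen and Johansen (1978), with a pointer to Andersen et al.\ (1993), Section~IV.4, and to Beyersmann et al.\ (2013) for details. Your proposal is correct and is precisely the standard martingale argument one finds in those references: linearise via Duhamel's equation for the Kaplan--Meier estimator, split into genuine martingales, apply Rebolledo's multivariate CLT, and finish with the continuous mapping theorem. The representation you derive,
\[
 W_n(s)=\int_0^s\bigl(S_2(u)-F_1(s)\bigr)\frac{\sqrt n}{Y(u)}\,\d M_1(u)+\int_0^s\bigl(F_1(u)-F_1(s)\bigr)\frac{\sqrt n}{Y(u)}\,\d M_2(u)+o_P(1),
\]
is exactly the one the paper later quotes (again without proof) at the start of Section~\ref{subsec:bs} as the basis for the wild bootstrap, so your sketch also substantiates that display.

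One small comment: the interchange you call ``Fubini'', swapping the outer Lebesgue integral $\int_0^s(\cdot)\,\alpha_1(u)\,\d u$ with the inner stochastic integral $\int_0^{u}(\cdot)\,\d M(v)$, is a stochastic Fubini theorem rather than the classical one; it is valid here because the integrand is predictable and locally bounded, but it deserves to be named as such.
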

%
Note, that \eqref{eq:at_risk_conv} holds e.g., in case of independent right-censoring and left-truncation; see Examples~IV.1.7. and 1.8. in Andersen et al. (1993).

Since we are interested in two-sample comparisons of CIFs,
we introduce each of the above quantities sample-specifically 
and denote them with a superscript ${ }^{(k)}, k=1,2$.
Moreover, we denote by $n_k$ the sample size of group $k=1,2$
and let $n = n_1 + n_2$ be the total sample size.
Henceforth it is supposed that $\frac{n_1}{n} \rightarrow p \in (0,1)$ holds as $\min(n_1,n_2) \rightarrow \infty$.
Fix a compact interval $I \subset [0,\tau)$, where $\tau := \tau^{(1)} \wedge \tau^{(2)}$.
We are now interested in testing the null hypothesis
\begin{align}
\label{eq:hypotheses}
 H_= : \{ F_1^{(1)} = F_1^{(2)} \text{ on } I \} 
 \; \text{ versus } \;
 H_{\neq} : \{ F_1^{(1)} \neq F_1^{(2)} \text{ on a set } A \subset I \text{ with } \lambda \! \! \lambda(A) > 0 \},
\end{align}
where $\lambda \! \! \lambda$ denotes Lebesgue measure.
An immediate consequence of the above result is the following theorem for comparing sample-specific CIFs:
%
\begin{thm}
\label{thm:sqrt_n_conv_cifs}
 {\color{black}Suppose~\eqref{eq:at_risk_conv} holds for both samples with $[0,t]$ replaced by $I$.} 
 Then, under $H_=$,
 \begin{align*}
  W_{n_1,n_2} = \sqrt{\frac{n_1 n_2}{n}} ( \hat F_1^{(1)} - \hat F_1^{(2)} ) \oDo V
 \end{align*}
 holds on the Skorohod space $\D ( I )$
 where $V$ is a time-continuous, zero-mean Gaussian process with covariance function
 \begin{align}
 \label{eq:zeta_V}
  \zeta_V(s_1,s_2) = &  (1-p) \zeta_U^{(1)}(s_1,s_2) + p \zeta_U^{(2)}(s_1,s_2) .
 \end{align}
Here $\zeta_U^{(k)}, k=1,2,$ is given by \eqref{eq: covU} with superscripts ${ }^{(k)}$ at all quantities in the integrand.
\end{thm}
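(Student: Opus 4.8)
The plan is to reduce the two-sample statement to the one-sample Aalen--Johansen limit theorem stated above, applied separately to each group, and then to combine the two limits using the independence of the samples. First I would apply the preceding theorem to each of the two independent samples, which is legitimate since \eqref{eq:at_risk_conv} is assumed to hold for both (with $[0,t]$ replaced by $I$). This yields, for $k=1,2$,
\begin{align*}
 W_{n_k}^{(k)} = \sqrt{n_k}(\hat F_1^{(k)} - F_1^{(k)}) \oDo U^{(k)} \quad \text{ on } \D(I),
\end{align*}
where $U^{(k)}$ is a time-continuous, zero-mean Gaussian process with covariance $\zeta_U^{(k)}$.

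Next, the role of the null hypothesis enters: under $H_=$ we have $F_1^{(1)} = F_1^{(2)}$ on $I$, so the unknown CIFs cancel in the centered difference,
\begin{align*}
 \hat F_1^{(1)} - \hat F_1^{(2)} = (\hat F_1^{(1)} - F_1^{(1)}) - (\hat F_1^{(2)} - F_1^{(2)}).
\end{align*}
Factoring the pooled normalization and using $n_1/n \to p$ and $n_2/n \to 1-p$, I would rewrite the statistic as
\begin{align*}
 W_{n_1,n_2} = \sqrt{\tfrac{n_2}{n}}\, W_{n_1}^{(1)} - \sqrt{\tfrac{n_1}{n}}\, W_{n_2}^{(2)},
\end{align*}
where the two deterministic prefactors converge to $\sqrt{1-p}$ and $\sqrt{p}$, respectively.

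Since the two samples are independent, marginal weak convergence of $W_{n_1}^{(1)}$ and $W_{n_2}^{(2)}$ upgrades to joint weak convergence of the pair on the product space $\D(I) \times \D(I)$, with limit $(U^{(1)}, U^{(2)})$ having independent components. Multiplying each component by its converging scalar prefactor and then subtracting, the continuous mapping theorem gives
\begin{align*}
 W_{n_1,n_2} \oDo V := \sqrt{1-p}\, U^{(1)} - \sqrt{p}\, U^{(2)}.
\end{align*}
Finally I would identify $V$ as a zero-mean Gaussian process (a linear combination of independent Gaussian processes), whose covariance I compute directly: the cross-terms vanish by independence, leaving
\begin{align*}
 \zeta_V(s_1,s_2) = (1-p)\,\zeta_U^{(1)}(s_1,s_2) + p\,\zeta_U^{(2)}(s_1,s_2),
\end{align*}
which is exactly \eqref{eq:zeta_V}.

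I expect the only delicate point to be the passage from marginal to joint weak convergence together with the application of the continuous mapping theorem to the subtraction map on the Skorohod space, where addition and subtraction are in general \emph{not} continuous. This is resolved by observing that the limiting processes $U^{(1)}, U^{(2)}$ have continuous sample paths almost surely, so they lie in $C(I) \subset \D(I)$; the difference map is continuous at every point of the support of the joint limit, and the theorem therefore applies. The scaling by the converging deterministic constants $\sqrt{n_2/n}$ and $\sqrt{n_1/n}$ is harmless and can be folded into the same continuous mapping argument.
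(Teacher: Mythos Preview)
Your argument is correct and follows exactly the route the paper intends: the authors state Theorem~\ref{thm:sqrt_n_conv_cifs} merely as ``an immediate consequence'' of the one-sample Aalen--Johansen limit theorem, and your proof spells out precisely that consequence---apply Theorem~1 to each independent sample, use $H_=$ to center, combine via joint weak convergence by independence, and identify the limiting covariance. Your remark about the discontinuity of subtraction on $\D(I)$ and its resolution via the continuity of the limit processes is a nice addition that the paper does not make explicit.
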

%
%
%
In the subsequent section it is shown that continuous functionals of $W_{n_1,n_2}$ 
can be used as test statistics for testing the equality of CIFs. However, due to its complicated asymptotic covariance structure (lacking independent increments) additional techniques for developing 
executable inference procedures are needed. As outlined in the next section, this can either be attacked by computing the corresponding critical values via valid bootstrap procedures
or, alternatively, by approximation techniques for approaching the asymptotic distribution up to a certain degree of accurateness.

%
\section{The Testing Procedures}\label{sec: tests}
%

\subsection{The Test Statistics}\label{subsec: tests}

Let now $I = [t_1,t_2] \subseteq [0,\tau), t_1 < t_2,$ be the interval on which we are interested to compare the CIFs $F_1^{(1)}$ and $F_1^{(2)}$.
There are plenty of possible test statistics for testing the hypotheses~\eqref{eq:hypotheses}
which can be based on $W_{n_1,n_2}$. 
The main idea is to plug the process $W_{n_1,n_2}$ into continuous functionals $\phi: \D[t_1,t_2] \rightarrow [0,\infty)$
so that $\phi(W_{n_1,n_2})$ tends to infinity for $\min(n_1,n_2) \rightarrow \infty$ and $\frac{n_1}{n} \rightarrow p$, 
whenever the alternative hypothesis $H_{\neq}$ is true. On the other hand, $\phi(W_{n_1,n_2})$ should converge to a non-degenerated limit on $H_=$. We here only discuss two possibilities and refer to connected literature on goodness-of-fit testing for further examples. 
As already suggested in Bajorunaite and Klein (2007) one possibility is to consider a weighted version of Kolmogorov-Smirnov-type, i.e.,
\begin{align}\label{eq:KS_Stat}
 T^{KS}=\sup\limits_{u \in [t_1,t_2]} \rho_1(u) | W_{n_1,n_2}(u) |,
\end{align}
where $\rho_1: [t_1,t_2] \rightarrow (0,\infty)$ is some measurable and bounded weight function. 
Another choice is a weighted version of a two-sample Cram\'{e}r-von Mises-type statistic, i.e.,
\begin{align}\label{eq:CvM_Stat}
 T^{CvM}=\int_{t_1}^{t_2} \rho_2 (u) W^2_{n_1,n_2}(u) \d u,
\end{align}
where now $\rho_2: [t_1,t_2] \rightarrow (0,\infty)$ is a measurable and integrable weight function.
The asymptotic distribution of these statistics can immediately be obtained from the weak convergence results for $W_{n_1,n_2}$ stated in Theorem~\ref{thm:sqrt_n_conv_cifs} and applications of the continuous mapping theorem.
\begin{thm}
\label{thm:test statistics}
 Under the conditions and notation of Theorem~\ref{thm:sqrt_n_conv_cifs} the convergences in distribution
 \begin{align}\label{eq: conv TKS}
  T^{KS}  \oDo \sup\limits_{u \in [t_1,t_2]} \rho_1(u) | V(u) |
 \end{align}
 \begin{align}\label{eq: conv TCvM}
   T^{CvM} \oDo \int_{t_1}^{t_2} \rho_2 (u) V^2(u) \d u
 \end{align}
 hold true.
Moreover, if $\rho_2$ is even continuous, the following representation in distribution holds for the limit in \eqref{eq: conv TCvM}
 \begin{align}\label{eq: dist wchi2}
 \int_{t_1}^{t_2} \rho_2 (u) V^2(u) \d u \gDg \sum_{j=1}^\infty \lambda_j Z_j^2,
\end{align}
where $(Z_j)_j$ are i.i.d. standard normal random variables and $(\lambda_j)_j$ are the eigenvalues of the covariance function $\zeta_{\rho_2}(s_1,s_2)=\rho_2^{1/2}(s_1)\zeta_V(s_1,s_2)\rho_2^{1/2}(s_2)$;
see~\eqref{eq:eigen} in the Appendix for details.
 \end{thm}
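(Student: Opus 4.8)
The plan is to obtain the first two convergences from the continuous mapping theorem, applied to the weak convergence $W_{n_1,n_2} \oDo V$ on $\D(I)$ from Theorem~\ref{thm:sqrt_n_conv_cifs}, and to derive the representation \eqref{eq: dist wchi2} from a Karhunen--Lo\`eve expansion of a suitably weighted version of the limit process $V$. First I would note that $V$ has almost surely continuous sample paths: its covariance function $\zeta_V$ in \eqref{eq:zeta_V} is continuous, being a positive combination of the continuous covariances $\zeta_U^{(k)}$. Hence $P(V \in C(I)) = 1$, where $C(I) \subset \D(I)$ denotes the subspace of continuous functions, on which the Skorohod topology coincides with the topology of uniform convergence. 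It therefore suffices to check that the functionals
\[
 \phi_1(f) = \sup_{u \in [t_1,t_2]} \rho_1(u) |f(u)|,
 \qquad
 \phi_2(f) = \int_{t_1}^{t_2} \rho_2(u) f^2(u) \, \d u
\]
are continuous at every $f \in C(I)$ with respect to uniform convergence; their discontinuity sets are then contained in $\D(I) \setminus C(I)$, a $P_V$-null set.

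For $\phi_1$, uniform convergence $f_n \to f$ together with boundedness of $\rho_1$ gives $\sup_u \rho_1(u) \bigl| |f_n(u)| - |f(u)| \bigr| \le \|\rho_1\|_\infty \|f_n - f\|_\infty \to 0$, and since the supremum is $1$-Lipschitz this yields $\phi_1(f_n) \to \phi_1(f)$. For $\phi_2$, uniform convergence on the compact interval forces $f_n^2 \to f^2$ uniformly, so by integrability of $\rho_2$ we get $|\phi_2(f_n) - \phi_2(f)| \le \|f_n^2 - f^2\|_\infty \int_{t_1}^{t_2} \rho_2(u) \, \d u \to 0$. The continuous mapping theorem now delivers \eqref{eq: conv TKS} and \eqref{eq: conv TCvM}.

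For the representation \eqref{eq: dist wchi2} I would introduce the centred Gaussian process $\tilde V(u) = \rho_2^{1/2}(u) V(u)$ on $[t_1,t_2]$, whose covariance kernel is exactly $\zeta_{\rho_2}$, so that $\int_{t_1}^{t_2} \rho_2 V^2 = \int_{t_1}^{t_2} \tilde V^2 = \|\tilde V\|_{L^2[t_1,t_2]}^2$. Here the continuity of $\rho_2$ is essential: it makes $\zeta_{\rho_2}$ a continuous, symmetric, non-negative definite kernel on the compact square $[t_1,t_2]^2$, so that by Mercer's theorem the associated integral operator $K$ on $L^2[t_1,t_2]$ is compact, self-adjoint and trace class, with non-negative eigenvalues $(\lambda_j)_j$ and orthonormal eigenfunctions $(e_j)_j$. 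The coefficients $\xi_j = \int_{t_1}^{t_2} \tilde V(u) e_j(u) \, \d u$ are then jointly Gaussian with mean zero and $\Cov(\xi_i, \xi_j) = \langle e_i, K e_j \rangle = \lambda_j \delta_{ij}$; thus they are independent with $\xi_j \sim N(0, \lambda_j)$, i.e.\ $\xi_j = \sqrt{\lambda_j} Z_j$ for i.i.d.\ standard normal $Z_j$. Parseval's identity then gives $\|\tilde V\|_{L^2}^2 = \sum_j \xi_j^2 = \sum_j \lambda_j Z_j^2$ almost surely, which is \eqref{eq: dist wchi2}.

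The main obstacle I anticipate is the rigorous justification of this last step. One must confirm that $\zeta_{\rho_2}$ is genuinely trace class, so that $\sum_j \lambda_j = \int_{t_1}^{t_2} \zeta_{\rho_2}(u,u) \, \d u < \infty$ and the series $\sum_j \lambda_j Z_j^2$ converges, and one must address the completeness subtlety in Parseval: the eigenfunctions with $\lambda_j > 0$ need only span the range of $K$, yet any $L^2$-component of $\tilde V$ orthogonal to all of them carries zero variance and hence vanishes almost surely, so $\|\tilde V\|_{L^2}^2 = \sum_j \xi_j^2$ remains valid. By comparison, the sample-path continuity of $V$ and the continuity of the functionals $\phi_1, \phi_2$ are routine.
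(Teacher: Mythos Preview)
Your proof is correct and follows essentially the same route as the paper: the continuous mapping theorem for \eqref{eq: conv TKS}--\eqref{eq: conv TCvM}, and Mercer's theorem together with a Karhunen--Lo\`eve argument for \eqref{eq: dist wchi2}. The only cosmetic difference is that the paper invokes the Karhunen--Lo\`eve expansion from Adler (1990) as a black box and then integrates term by term, whereas you compute the $L^2$-coefficients $\xi_j$ by hand and apply Parseval; your explicit handling of the completeness subtlety is a detail the paper leaves implicit.
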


\begin{rem}
\label{rem:test statistics}
  \text{ } {\\}
  (a) In general, the above test statistics cannot be made asymptotically pivotal by any transformation, 
  so that there is no obvious way 
    to state a valid asymptotical test in the classical sense.\\
  (b) Note that a Pepe (1991) \nocite{Pepe91} type statistic, $\int_{t_1}^{t_2} \rho_2(u) W_{n_1,n_2}(u) \d u$, actually 
  leads to a test for ordered CIFs, i.e., for the null hypothesis
    \begin{align*}
     H_\leq : \{ F_1^{(1)} \leq F_1^{(2)} \text{ on } [t_1,t_2] \} 
      \; \text{ versus } \;
     H_{\gneqq} : \{ F_1^{(1)} \geq F_1^{(2)} \text{ on } [t_1,t_2] \; \& \; F_1^{(1)} \neq F_1^{(2)} \}.
    \end{align*}
    {\color{black} Hence, although frequently used for testing equality of CIFs ($H_=$), it can only detect alternatives of the form $H_{\gneqq}$ and possesses 
    low power for other alternatives. A similar comment applies for Gray's (1988) test\nocite{Gray_1988} which is also quite popular in practice. 
    In particular, Gray's test statistic is based on integrating differences of weighted \emph{increments} of CIFs, and is thus not 
    consistent against all alternatives of the form $H_{\neq}$. Nevertheless we will consider both tests in our simulations; see Section~\ref{sec: sim} below.}\\
    In Bajorunaite and Klein (2007, 2008) \nocite{bajorunaite07} and Dobler and Pauly (2014) \nocite{dobler14} tests of Pepe-type have been utilized for testing 
    $H_\leq$ versus $H_{\gneqq}$ in combination with Lin's (1997) \nocite{lin97} and Efron's (1979) \nocite{efron79} resampling techniques, respectively.\\
  (c) Choices for $\rho_i$: For simplicity, we could take $\rho_i \equiv 1$. In contrast, the weight function
    \begin{align*}
     \rho_2(u) = \frac{1}{\sqrt{(t_2 - u) (u - t_1)}}
    \end{align*}
    corresponds to an Anderson-Darling-type test for CIFs. In this case, however, the representation~\eqref{eq: dist wchi2} no longer holds.\\
    Moreover, it can also be shown that the asymptotic results~\eqref{eq: conv TKS}--\eqref{eq: dist wchi2} hold for data-dependent weight functions $\hat \rho_i$ as long as
    $\hat \rho_i \oPo \rho_i$ uniformly on $[t_1,t_2]$ in probability with $\rho_i: [t_1,t_2] \rightarrow (0,\infty)$ measurable and bounded (for $i=1$) or integrable (for $i=2$) and continuous (for the representation of $T^{CvM}$).\\
\end{rem}

Due to the asymptotic non-pivotality of these test statistics,
critical values of the correspon-ding tests cannot be assessed directly form their asymptotics. 
In the following we therefore introduce different approaches for calculating critical values that lead to adequate test decisions.

\subsection{Wild Bootstrap Tests}\label{subsec:bs}

For the computation of critical values, we start by formulating a wild bootstrap statistic
which has the same asymptotic distribution as $W_{n_1,n_2}$ under $H_=$.
To this end, consider a linear martingale representation of $W_{n_1,n_2}$,
\begin{align*}
 W_{n_1,n_2}(s) & = \sqrt{\frac{n_1 n_2}{n}} \sum_{k=1}^2 (-1)^{k+1} \sum_{i=1}^{n_k}
  \Big\{ \int_0^s \frac{ S_2^{(k)}(u) - F_1^{(k)}(s) }{Y^{(k)}(u)} \d M_{1;i}^{(k)}(u) \\
  & \quad + \int_0^s \frac{ F_1^{(k)}(u) - F_1^{(k)}(s)}{Y^{(k)}(u)} \d M_{2;i}^{(k)}(u) \Big\}
  + o_p(1);
\end{align*}
see also Lin (1997) in the case of solely right-censored data and Beyersmann et al. (2013), Dobler and Pauly (2014) or Dobler et al. (2015) for more general situations.
Now, Lin's resampling technique is based on replacing all unknown CIFs by their Aalen-Johansen estimators
and each $\d M_{j;i}^{(k)}$ with $G^{(k)}_{j;i} \d N_{j;i}^{(k)}$,
where the $G^{(k)}_{j;i}$ are i.i.d. standard normal variates, independent of the data.
This leads to the wild bootstrap statistic
\begin{align*}
 \hat W_{n_1,n_2}(s) & = \sqrt{\frac{n_1 n_2}{n}} \sum_{k=1}^2 (-1)^{k+1} \sum_{i=1}^{n_k}
  \Big\{ \int_0^s \frac{ \hat S_2^{(k)}(u) - \hat F_1^{(k)}(s) }{Y^{(k)}(u)} G^{(k)}_{1;i} \d N_{1;i}^{(k)}(u) \\
  & \quad + \int_0^s \frac{ \hat F_1^{(k)}(u) - \hat F_1^{(k)}(s) }{Y^{(k)}(u)} G^{(k)}_{2;i} \d N_{2;i}^{(k)}(u) \Big\} .
\end{align*}
Beyersmann et al. (2013) generalized this approach by allowing the $G^{(k)}_{j;i}$ to be
i.i.d. zero-mean random variables with variance 1 and finite fourth moment.
This resampling scheme was further extended by Dobler et al. (2015) to even allow for conditionally independent,
data-dependent multipliers $G^{(k)}_{j;i}$.
Beyersmann et al. (2013) proved a conditional limit theorem for a one-sample version of $\hat W_{n_1,n_2}$
 from which we can directly deduce the following result.
\begin{thm}[Beyersmann et al. (2013)]
 Suppose~\eqref{eq:at_risk_conv} holds for both sample groups on the interval $[t_1,t_2]$.
 Conditioned on the data convergence in distribution
 \begin{align*}
  \hat W_{n_1,n_2} \oDo V
 \end{align*}
 holds on the Skorohod space $\D[t_1,t_2]$ in probability under both $H_=$ as well as $H_{\neq}$.
 Here $V$ is a time-continuous, zero-mean Gaussian process with covariance function given by~\eqref{eq:zeta_V}.
\end{thm}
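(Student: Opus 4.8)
The plan is to reduce this two-sample statement to the one-sample conditional limit theorem of Beyersmann et al.\ (2013) by exploiting the independence of the two groups. First I would peel off the group-specific scalings: writing $\sqrt{n_1 n_2/n} = \sqrt{n_2/n}\,\sqrt{n_1}$ in the $k=1$ summand and $\sqrt{n_1 n_2/n} = \sqrt{n_1/n}\,\sqrt{n_2}$ in the $k=2$ summand, the statistic decomposes as
\begin{align*}
 \hat W_{n_1,n_2} = \sqrt{\tfrac{n_2}{n}}\, \hat W^{(1)}_{n_1} - \sqrt{\tfrac{n_1}{n}}\, \hat W^{(2)}_{n_2},
\end{align*}
where $\hat W^{(k)}_{n_k} = \sqrt{n_k}\sum_{i=1}^{n_k}\{\cdots\}$ is exactly the one-sample wild bootstrap process for group $k$ to which the cited theorem applies.

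The second step is to invoke the one-sample result. Under~\eqref{eq:at_risk_conv} for group $k$ on $[t_1,t_2]$, Beyersmann et al.\ (2013) give, conditionally on the data and in probability, $\hat W^{(k)}_{n_k} \oDo U^{(k)}$ on $\D[t_1,t_2]$, where $U^{(k)}$ is a zero-mean Gaussian process with covariance $\zeta_U^{(k)}$ from~\eqref{eq: covU}. Crucially, this holds whether or not $H_=$ is true, since the resampling construction plugs in the Aalen-Johansen and Kaplan-Meier estimators computed from the observed data and never imposes the null; the only input is the multiplicative intensity model, which holds under both hypotheses. This is exactly why the conclusion is asserted under $H_=$ as well as $H_{\neq}$.

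Third, I would combine the two pieces. Because the two groups are independent and the multipliers $G^{(k)}_{j;i}$ are drawn independently across $k$, the processes $\hat W^{(1)}_{n_1}$ and $\hat W^{(2)}_{n_2}$ are conditionally independent given the data. Marginal conditional convergence together with conditional independence yields the joint conditional convergence $(\hat W^{(1)}_{n_1}, \hat W^{(2)}_{n_2}) \oDo (U^{(1)}, U^{(2)})$ with $U^{(1)}$ independent of $U^{(2)}$. Since $n_2/n \to 1-p$ and $n_1/n \to p$, a conditional continuous-mapping and Slutsky argument applied to the continuous map $(x,y)\mapsto \sqrt{1-p}\,x - \sqrt{p}\,y$ gives
\begin{align*}
 \hat W_{n_1,n_2}\oDo \sqrt{1-p}\,U^{(1)} - \sqrt{p}\,U^{(2)} =: V.
\end{align*}
The limit is a zero-mean Gaussian process whose covariance is $(1-p)\zeta_U^{(1)} + p\,\zeta_U^{(2)} = \zeta_V$ by~\eqref{eq:zeta_V} (the cross term vanishes by independence), so $V$ coincides in law with the limit in Theorem~\ref{thm:sqrt_n_conv_cifs}.

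The main obstacle is the careful handling of the convergence mode rather than any new estimate. Conditional weak convergence \emph{in probability} must be manipulated through the decomposition: I would fix a bounded Lipschitz functional $h$ on $\D[t_1,t_2]$ and show that $E[h(\hat W_{n_1,n_2})\mid \text{data}]\oPo E[h(V)]$. Turning the two marginal statements into the joint one, and then pushing the joint limit through the continuous map in this in-probability sense, is where the work lies; conditional independence makes the relevant conditional expectations factorize, and the converging nonrandom scalings $\sqrt{n_2/n}$ and $\sqrt{n_1/n}$ are absorbed by a Slutsky step valid for this mode. Tightness on $\D[t_1,t_2]$ is inherited piecewise from the one-sample theorem, so no fresh maximal inequality is required; the genuinely new content beyond the cited result is only the scaling bookkeeping and the conditional-independence combination.
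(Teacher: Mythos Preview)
Your proposal is correct and matches the paper's approach: the paper does not give a detailed proof but simply states that the result is ``directly deduced'' from the one-sample conditional limit theorem of Beyersmann et al.\ (2013), and your decomposition $\hat W_{n_1,n_2} = \sqrt{n_2/n}\,\hat W^{(1)}_{n_1} - \sqrt{n_1/n}\,\hat W^{(2)}_{n_2}$ together with conditional independence and Slutsky is exactly the direct deduction the authors have in mind. You have correctly supplied the details the paper leaves implicit, including the observation that the one-sample result does not rely on $H_=$, which justifies the claim under both hypotheses.
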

Since $W_{n_1,n_2}$ and its wild bootstrap version $\hat W_{n_1,n_2}$ have the same limit under $H_=$, the construction of asymptotic level $\alpha$ tests is now accomplished 
by also plugging $\hat W_{n_1,n_2}$ into the corresponding continuous functionals $\phi$. 
Consequently, the resulting tests depending on $\phi(W_{n_1,n_2})$ (as test statistics) and $\phi(\hat W_{n_1,n_2})$ (yielding data-dependent critical values) are asymptotic level $\alpha$ tests. 
Furthermore, the tests are consistent, that is, they reject the alternative hypothesis $H_{\neq}$ with probabilities tending to 1 as $n \rightarrow \infty$.
Thus, the following theorem follows immediately from the weak convergence results of the preceding theorems for $W_{n_1,n_2}$ and $\hat W_{n_1,n_2}$ and from applications of the continuous mapping theorem.
\begin{thm} \label{thm:resampling_wild_cif_equality_1}
 Let $G_{j;i}^{(k)}, i=1,\dots,n_k \in \N, j,k=1,2,$ be i.i.d. zero-mean wild bootstrap weights
 with existing fourth moments and variance 1. Then the following tests are asymptotic level $\alpha$ wild bootstrap tests for $H_=$ vs. $H_{\neq}$:
  \begin{align*}
    \varphi^{KS} = \left\{
    \begin{array}{rlcr}
      1 &	& > 	&\\
	& T^{KS} & 	& c^{KS} \\
      0	&	& \leq 	&
    \end{array}
    \right.,\quad
\varphi^{CvM} = \left\{
    \begin{array}{rlcr}
      1 && > & \\
	&	T^{CvM}	& & c^{CvM}  \\
      0	&& \leq &
    \end{array}
    \right.,
  \end{align*}  
  where $c^{KS}( \cdot )$ and $c^{CvM}( \cdot )$ are the data-dependent $(1-\alpha)$-quantiles of the conditional distributions
  of $\sup_{u \in [t_1,t_2]} \rho_1(u) | \hat W_{n_1,n_2}(u) |$ and $\int_{t_1}^{t_2} \rho_2(u) \hat W^2_{n_1,n_2}(u) \d u$, respectively, given the observations.
\end{thm}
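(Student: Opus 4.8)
The plan is to derive both assertions from two weak-convergence inputs already in hand: the unconditional limits $T^{KS},T^{CvM}\oDo\phi(V)$ from Theorem~\ref{thm:test statistics}, and the conditional limit $\hat W_{n_1,n_2}\oDo V$ in probability from the wild bootstrap limit theorem stated just above. Write $\phi_1(w)=\sup_{u\in[t_1,t_2]}\rho_1(u)|w(u)|$ and $\phi_2(w)=\int_{t_1}^{t_2}\rho_2(u)w^2(u)\,\d u$ for the two continuous functionals on $\D[t_1,t_2]$, so that $T^{KS}=\phi_1(W_{n_1,n_2})$ and $T^{CvM}=\phi_2(W_{n_1,n_2})$. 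Applying the continuous mapping theorem in its version for conditional weak convergence in probability to $\hat W_{n_1,n_2}\oDo V$ yields, in probability,
\begin{align*}
 \phi_1(\hat W_{n_1,n_2}) \oDo \phi_1(V), \qquad \phi_2(\hat W_{n_1,n_2}) \oDo \phi_2(V).
\end{align*}
Hence under $H_=$ the statistics and their bootstrap counterparts share the same limits $\phi_1(V)$ and $\phi_2(V)$.

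First I would verify that the limiting laws of $\phi_1(V)$ and $\phi_2(V)$ have continuous distribution functions that are strictly increasing at their $(1-\alpha)$-quantiles $q_1$ and $q_2$. For the Cram\'er-von Mises functional this is immediate from the representation~\eqref{eq: dist wchi2}: as the eigenvalues $\lambda_j$ of the nontrivial covariance operator $\zeta_{\rho_2}$ are positive, $\sum_j\lambda_j Z_j^2$ is absolutely continuous on $(0,\infty)$. For the Kolmogorov-Smirnov functional I would invoke the classical fact that the supremum of a centred, nondegenerate Gaussian process has an absolutely continuous law on $(0,\infty)$, which gives continuity and local strict monotonicity at $q_1$.

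Next I would invoke the standard lemma that conditional weak convergence in probability, combined with continuity and local strict monotonicity of the limiting distribution function at $q_i$, forces the data-dependent conditional quantiles to converge:
\begin{align*}
 c^{KS} \oPo q_1 \qquad \text{and} \qquad c^{CvM} \oPo q_2.
\end{align*}
Since the limits $q_1,q_2$ are deterministic, a converging-together (Slutsky) argument applied to the unconditional convergences of Theorem~\ref{thm:test statistics}, together with continuity of the limit law at $q_i$, yields under $H_=$
\begin{align*}
 P(T^{KS} > c^{KS}) \lra P(\phi_1(V) > q_1) = \alpha,
\end{align*}
and analogously for $T^{CvM}$, which is the claimed asymptotic level. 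For consistency under $H_{\neq}$ I would note that the bootstrap limit theorem holds under both hypotheses, so $c^{KS},c^{CvM}$ remain bounded in probability, whereas $\sqrt{n_1 n_2/n}(\hat F_1^{(1)}-\hat F_1^{(2)})$ carries a diverging deterministic part on the set where the CIFs differ; hence $T^{KS},T^{CvM}\oPo\infty$ and the rejection probabilities tend to $1$.

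The main obstacle is the quantile-convergence step: one must justify passing from the \emph{conditional} weak convergence in probability of $\phi_i(\hat W_{n_1,n_2})$ to convergence of the random conditional quantiles $c^{KS},c^{CvM}$, and it is precisely here that the absolute continuity of the Gaussian-supremum law (for $T^{KS}$) and of the weighted $\chi^2$-series in~\eqref{eq: dist wchi2} (for $T^{CvM}$) is needed, in order to rule out flat spots or jumps of the limiting distribution function at $q_1$ and $q_2$.
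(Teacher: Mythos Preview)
Your proposal is correct and follows the same route the paper sketches: the paper merely states that the result ``follows immediately from the weak convergence results of the preceding theorems for $W_{n_1,n_2}$ and $\hat W_{n_1,n_2}$ and from applications of the continuous mapping theorem,'' without spelling out the quantile-convergence step. You have supplied precisely the details the paper omits---continuity of the limit laws of $\phi_1(V)$ and $\phi_2(V)$ and the resulting convergence $c^{KS}\oPo q_1$, $c^{CvM}\oPo q_2$---so your argument is a fleshed-out version of the paper's one-line justification rather than a different approach.
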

\begin{rem}
  \text{ } {\\}
  (a) The above bootstrap version of the Kolmogorov-Smirnov-type test statistic has also been suggested in Bajorunaite and Klein (2007)\nocite{bajorunaite07}.
    The present article additionally provides a theoretical justification of its large-sample properties.\\
  (b) In general, the exchangeably weighted bootstrap discussed in Dobler and Pauly (2014) \nocite{dobler14} 
  is not applicable since the wrong limiting covariance structure of the bootstrapped process leads to an asymptotically incorrect critical value.\\
  (c) A modification of Theorem~\ref{thm:resampling_wild_cif_equality_1} can be utilized 
    for the construction of asymptotically valid confidence bands for $F_1^{(1)} - F_1^{(2)}$;
    see Beyersmann et al. (2013) for further details with regard to the one-sample case.\\
  (d) Also in this case, it can be shown that the results hold for data-dependent weight functions $\hat \rho_i$ as long as
    $\hat \rho_i \oPo \rho_i$ uniformly on $[t_1,t_2]$ in probability with $\rho_i$ as in Theorem~\ref{thm:test statistics}. 
   For example, it would be possible to choose $\hat \rho_2$ as a kernel density estimator for $\rho_2 = (1-p) \alpha_1^{(1)} + p \alpha_1^{(2)}$ 
    if both cause-specific hazard intensities are continuous. Here the kernel function needs to be of bounded variation and the bandwidth $b_n \rightarrow 0$ may fulfill $\sup_{u \in [t_1,t_2]} ( b_n^2 Y^{(n_k)}(u) )^{-1} \oPo 0$, $k=1,2$.
    For more details, see Section~IV.2 in Andersen et al. (1993). Similarly, other goodness-of-fit statistics may be realized.\\
 (e) Note that the case with only one competing risk yields wild bootstrap versions of classical goodness-of-fit tests.
\end{rem}

In practical situations critical values are calculated by Monte-Carlo simulations, repeatedly generating standardized wild bootstrap weights; see e.g., Lin (1997) or Beyersmann et al. (2013) for additional details.

%
\subsection{Approximation Procedures}\label{subsec:approx_proc}
%

In case of the Cram\'{e}r-von Mises statistic with continuous $\rho_2$,
another way to approximate the unknown asymptotic $(1-\alpha)$-quantile 
of Theorem~\ref{thm:test statistics} (under the null hypothesis of equal CIFs for the first risk) may be based on a Box or Pearson approximation; see Box (1954) and Pearson (1959) as well 
as Rauf Ahmad et al. (2008) \nocite{Ahmad_2008} or Pauly et al. (2013) \nocite{PEB13} for applications of these approaches for inference of high-dimensional data.

The main idea is to approximate the distribution of 
\begin{align}
  Q=\sum_{j=1}^\infty \lambda_j Z_j^2, 
\end{align}
the limit distribution of $T^{CvM}$, by adequately transformed $\chi^2$-distributions. 
In case of the {\it Box approximation} this is done by equating the first two moments of $Q$ with those of a scaled $g \chi_f^2$-distribution. 
Recall that the expected value and variance of $g \chi_f^2$ are given by $E[g \chi_f^2] = gf$ {and} $Var(g \chi_f^2) = 2 g^2 f,$ respectively. 
Thus, $f,g$ need to solve the following equations for matching the first two asymptotic moments of the test statistic $T^{CvM}$:
  \begin{align}\label{eq: eqBox1}
   gf & = \E [ Q ] = \int \rho_2(u) \zeta_V(u,u) \d u =\mu\\
   \label{eq: eqBox2}\quad \text{and} \quad 2 g^2 f & = Var ( Q ) 
   = 2 \int \int \rho_2(u) \zeta_V^2(u,s) \rho_2(s) \d u \d s =\sigma^2
  \end{align}
where the integrals run over the interval $[t_1,t_2]$.  The justification for exchanging the order of integration is given in the Appendix; see the proof of Theorem~\ref{thm:test statistics}. 
This leads to the choices
\begin{align*}
 f = \frac{2 \mu^2}{\sigma^2} \quad \text{and} \quad g = \frac{\sigma^2}{2 \mu}
\end{align*}
which fulfill $\E[g \chi_f^2] = \E[Q]$ and $Var(g \chi_f^2) = Var(Q).$

Since $f$ and $g$ are in general unknown, adequate consistent estimators are needed. This is achieved via plugging in the canonical Welch-type covariance estimator 
\begin{align}\label{eq:zeta_hat}
 \hat \zeta_{n_1,n_2} = \frac{n_2}{n} \hat \zeta_{n_1}^{(1)} + \frac{n_1}{n} \hat \zeta_{n_2}^{(2)} 
\end{align}
with
  \begin{align}
\nonumber   & \hat \zeta^{(k)}_{n_k}(s_1,s_2) = n_k \int_0^{s_1 \wedge s_2} \frac{ \{ \hat S_2^{(k)}(u) - \hat F_1^{(k)}(s_1) \} 
   \{ \hat S_2^{(k)}(u) - \hat F_1^{(k)}(s_2) \} } {(Y^{(k)})^2(u)} \d N_{1}^{(k)}(u) \\
\label{eq: zeta_hat_k}   & \quad  + n_k \int_0^{s_1 \wedge s_2} \frac{ \{ \hat F_1^{(k)}(u) - \hat F_1^{(k)}(s_1) \} 
   \{ \hat F_1^{(k)}(u) - \hat F_1^{(k)}(s_2) \} } {(Y^{(k)})^2(u)} \d N_{2}^{(k)}(u).
  \end{align}
In the Appendix it is shown that $\hat \zeta_{n_1,n_2}$ is uniformly consistent on the rectangle $[t_1,t_2]^2$.
The resulting Box-type approximation is summarized as a theorem.

\begin{thm}[A Box-type approximation]
\label{thm:box_approx}
 Let $\rho_2: [t_1,t_2] \rightarrow (0,\infty)$ be a continuous weight function. 
Then 
  \begin{align*}
   \hat f := \frac{2 \hat \mu_{n_1,n_2}^2}{\hat \sigma_{n_1,n_2}^2} \quad \text{and} \quad 
  \hat g := \frac{\hat \sigma_{n_1,n_2}^2}{2 \hat \mu_{n_1,n_2}}
  \end{align*}
are consistent estimators for $f,g > 0$ such that $\E[g \chi_f^2] = \E[Q]$ and $Var(g \chi_f^2) = Var(Q)$. Here 
\begin{align*}
    & \hat \mu_{n_1,n_2} := \int_{t_1}^{t_2} \rho_2(s) \hat \zeta_{n_1,n_2}(s,s) \d s \text{ and }
    \hat \sigma_{n_1,n_2}^2 := 2 \int_{[t_1,t_2]^2} \rho_2(s_1) \hat \zeta_{n_1,n_2}^2(s_1,s_2) \rho_2(s_2) \d\leb^2(s_1,s_2).
\end{align*}
are consistent estimators for the asymptotic mean and variance of $T^{CvM}$, respectively.
\end{thm}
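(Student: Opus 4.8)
The plan is to reduce the whole statement to two scalar convergences in probability, $\hat\mu_{n_1,n_2} \oPo \mu$ and $\hat\sigma_{n_1,n_2}^2 \oPo \sigma^2$, and then to obtain $\hat f \oPo f$, $\hat g \oPo g$ by the continuous mapping theorem. First I would pin down the population targets. By Theorem~\ref{thm:test statistics} the limit of $T^{CvM}$ is $Q = \int_{t_1}^{t_2}\rho_2(u)V^2(u)\,\d u$. Since the integrand is nonnegative, Tonelli gives $\E[Q] = \int_{t_1}^{t_2}\rho_2(u)\zeta_V(u,u)\,\d u = \mu$. For the variance I would use Gaussianity of $V$ through Wick's formula, $\Cov(V^2(u),V^2(s)) = 2\,\zeta_V(u,s)^2$, so that, after justifying the interchange of variance and double integral exactly as in the proof of Theorem~\ref{thm:test statistics}, $\Var(Q) = 2\int_{[t_1,t_2]^2}\rho_2(s_1)\zeta_V^2(s_1,s_2)\rho_2(s_2)\,\d\leb^2(s_1,s_2) = \sigma^2$. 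In particular $\mu>0$ and $\sigma^2>0$, because $\rho_2>0$ and $\zeta_V(u,u)=\Var(V(u))$ is strictly positive on a set of positive Lebesgue measure (the limit being nondegenerate). Hence the map $(x,y)\mapsto(2x^2/y,\,y/(2x))$ is continuous at $(\mu,\sigma^2)\in(0,\infty)^2$, and with $(f,g)$ defined by it the matching identities $\E[g\chi_f^2]=gf=\mu=\E[Q]$ and $\Var(g\chi_f^2)=2g^2f=\sigma^2=\Var(Q)$ hold by construction.

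The substantive step is the consistency of the two plug-in functionals, and here I would take as given the uniform consistency proved in the Appendix, namely $\sup_{(s_1,s_2)\in[t_1,t_2]^2}|\hat\zeta_{n_1,n_2}(s_1,s_2)-\zeta_V(s_1,s_2)| \oPo 0$. Since $\rho_2$ is continuous on the compact interval $[t_1,t_2]$, it is bounded by some $\|\rho_2\|_\infty<\infty$. For the mean this yields
\begin{align*}
 |\hat\mu_{n_1,n_2}-\mu| \;\leq\; \|\rho_2\|_\infty\,(t_2-t_1)\,\sup_{s\in[t_1,t_2]}\bigl|\hat\zeta_{n_1,n_2}(s,s)-\zeta_V(s,s)\bigr| \;\oPo\; 0.
\end{align*}
For the variance I would factor $\hat\zeta_{n_1,n_2}^2-\zeta_V^2=(\hat\zeta_{n_1,n_2}-\zeta_V)(\hat\zeta_{n_1,n_2}+\zeta_V)$ and note that $\hat\zeta_{n_1,n_2}$ is uniformly bounded on $[t_1,t_2]^2$ with probability tending to one, since $\sup|\hat\zeta_{n_1,n_2}|\leq\|\zeta_V\|_\infty+o_p(1)$. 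The same uniform-convergence-against-a-finite-measure argument then gives $|\hat\sigma_{n_1,n_2}^2-\sigma^2|\oPo 0$.

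Finally I would combine these into the joint convergence $(\hat\mu_{n_1,n_2},\hat\sigma_{n_1,n_2}^2)\oPo(\mu,\sigma^2)$ and apply the continuous mapping theorem to the map above, obtaining $\hat f\oPo f$ and $\hat g\oPo g$, which is exactly the assertion. The only delicate point is the uniform consistency of $\hat\zeta_{n_1,n_2}$ on the rectangle $[t_1,t_2]^2$; this is where the real work lies, resting on uniform consistency of the Aalen--Johansen and Kaplan--Meier estimators, the at-risk condition~\eqref{eq:at_risk_conv}, and a martingale/Glivenko--Cantelli control of the stochastic integrals in~\eqref{eq: zeta_hat_k}. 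Once that ingredient is in place (as established in the Appendix), the passage to the integral functionals $\hat\mu_{n_1,n_2},\hat\sigma_{n_1,n_2}^2$ and then to $(\hat f,\hat g)$ is routine.
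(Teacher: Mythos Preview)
Your proof is correct and follows essentially the same route as the paper: reduce the claim to the consistency of $\hat\mu_{n_1,n_2}$ and $\hat\sigma_{n_1,n_2}^2$, obtain both from the uniform consistency of $\hat\zeta_{n_1,n_2}$ on $[t_1,t_2]^2$ (Corollary~\ref{cor:unif_conv_zeta_hat}), and conclude by continuous mapping. The only minor difference is that you identify $\Var(Q)=2\int_{[t_1,t_2]^2}\rho_2\,\zeta_V^2\,\rho_2\,\d\leb^2$ directly via the Gaussian moment identity $\Cov(V^2(u),V^2(s))=2\,\zeta_V(u,s)^2$, whereas the paper reaches the same formula through the Mercer/Karhunen--Lo\`eve representation $Q\eqd\sum_j\lambda_jZ_j^2$ and $\Var(Q)=2\sum_j\lambda_j^2$; both derivations are valid and your version is slightly more direct for this step, while the paper's spectral route has the advantage of being reused verbatim in the proof of Theorem~\ref{thm:pearson_approx}.
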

Following Box (1954), we can deduce an approximative test for $H_=$ vs. $H_{\neq}$ by 
  \begin{align*}
    \varphi^{B} = \left\{
    \begin{array}{rlcr}
      1 && > &  \\
	&	T^{CvM}	& & c^{B} \\
      0	&& \leq &
    \end{array}
    \right.
  \end{align*}
  where $c^{B}( \cdot )$ is the $(1-\alpha)$-quantile of $\hat g \chi_{\hat f}^2$.

For an extension of this approach one might think about matching even more moments; see e.g., Pauly et al. (2013) for an application and additional motivation. As in that paper we now consider 
a studentized version of the test statistic given by
$$
  T^{CvM}_{\textnormal{stud}} = \frac{T^{CvM}-\hat \mu_{n_1,n_2}}{\hat \sigma_{n_1,n_2}}
$$
with $\hat \mu_{n_1,n_2}$ and $\hat \sigma^2_{n_1,n_2}$ as in Theorem~\ref{thm:box_approx}. Its asymptotic distribution is given by the law of
\begin{align*}
      Q_{\textnormal{stud}} := \frac{Q - \mu}{\sigma} := \frac{Q - \E[Q]}{Var(Q)^{1/2}}
\end{align*}
with $\mu = \sum_{j=1}^\infty \lambda_j$ and $\sigma^2 = 2 \sum_{j=1}^\infty \lambda_j^2$. 
This follows from Theorem~\ref{thm:test statistics} and the consistency of 
$\hat \mu_{n_1,n_2}$ and $\hat \sigma^2_{n_1,n_2}$ for $\mu$ and $\sigma^2$ {\color{black}are} shown in the proof of Theorem~\ref{thm:box_approx}. 
Now, the {\it Pearson approximation} 
of the distribution of $Q_{\textnormal{stud}}$ is the law of the random variable 
    \begin{align*}
      \chi_{\kappa,\textnormal{stud}}^2 := \frac{\chi_\kappa^2 - \E[\chi_\kappa^2]}{Var(\chi_\kappa^2)^{1/2}} 
	= \frac{\chi_\kappa^2 - \kappa}{\sqrt{2\kappa}}.
    \end{align*}
Here the parameter $\kappa$ is chosen in such a way that mean, variance and skewness of $\chi_{\kappa,{\textnormal{stud}}}^2$ and $Q_{\textnormal{stud}}$ coincide. 
As shown in the proof of Theorem~\ref{thm:pearson_approx}, this leads to the choice
\begin{align*}
    \kappa =  \frac{\left(\sum_{j=1}^\infty \lambda_j^2\right)^{3}}{\left(\sum_{j=1}^\infty \lambda_j^3 \right)^{2}}.
\end{align*}
Since the parameter $\kappa > 0$ is unknown, it needs to be estimated.
The resulting Pearson approximation is summarized below.
\begin{thm}[A Pearson-type approximation]
\label{thm:pearson_approx}
 Let $\rho_2: [t_1,t_2] \rightarrow (0,\infty)$ be a continuous weight function.
 Then the estimator
    \begin{align*}
      \hat \kappa := \frac{\hat \sigma_{n_1,n_2}^6}{8 \hat \gamma^2_{n_1,n_2} }
    \end{align*}
is consistent for the true parameter $\kappa$ that leads to the desired equalities of mean, variance and skewness of $Q_{\textnormal{stud}}$ and $\chi_{\kappa,\textnormal{stud}}^2$. Here 
    \begin{align*}
      \hat \gamma_{n_1,n_2} := \int_{[t_1,t_2]^3} 
	 \rho_2(s_1) \hat \zeta_{n_1,n_2}(s_1,s_2)  \rho_2(s_2) \hat \zeta_{n_1,n_2}(s_2,s_3)
	 \rho_2(s_3) \hat \zeta_{n_1,n_2}(s_3,s_1) \d\leb^3(s_1,s_2,s_3)
    \end{align*}
is a consistent estimator for $\sum_{j=1}^\infty \lambda_j^3$.
\end{thm}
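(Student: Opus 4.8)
The plan is to prove this in two independent parts: first derive the closed form for $\kappa$ from the three moment-matching conditions, and then establish consistency of the plug-in estimator $\hat\kappa$ by passing through a trace representation of $\sum_j\lambda_j^3$.

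For the first part, I would compute cumulants rather than raw moments, since cumulants add over independent summands. Each $\lambda_j Z_j^2$ is a scaled $\chi_1^2$ variate, and $\chi_1^2$ has cumulants $1,2,8$; scaling by $\lambda_j$ multiplies the $r$-th cumulant by $\lambda_j^r$. Hence $Q=\sum_j\lambda_j Z_j^2$ has first three cumulants $\sum_j\lambda_j$, $2\sum_j\lambda_j^2$ and $8\sum_j\lambda_j^3$, while $\chi_\kappa^2$ has cumulants $\kappa,2\kappa,8\kappa$. Because studentization is an affine map, $Q_{\textnormal{stud}}$ and $\chi_{\kappa,\textnormal{stud}}^2$ automatically agree in mean ($0$) and variance ($1$), so only the (scale- and location-invariant) skewnesses must be equated. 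Setting the skewness $\kappa_3/\kappa_2^{3/2}$ of $Q$ equal to that of $\chi_\kappa^2$ and solving for $\kappa$ gives exactly $\kappa=(\sum_j\lambda_j^2)^3/(\sum_j\lambda_j^3)^2$, which identifies the target parameter and shows the stated $\kappa$ is the unique matching value.

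For the second part, I would first express $\sum_j\lambda_j^3$ as a trace. Since $\rho_2$ is continuous and $\zeta_V$ is the covariance of the continuous Gaussian limit $V$, the kernel $\zeta_{\rho_2}(s_1,s_2)=\rho_2^{1/2}(s_1)\zeta_V(s_1,s_2)\rho_2^{1/2}(s_2)$ is continuous, symmetric and positive semidefinite on the compact square $[t_1,t_2]^2$. By Mercer's theorem the associated integral operator $K$ is then trace class with nonnegative, summable eigenvalues $(\lambda_j)_j$, so $K^3$ is trace class and
$$\sum_j\lambda_j^3=\tr(K^3)=\int_{[t_1,t_2]^3}\rho_2(s_1)\zeta_V(s_1,s_2)\rho_2(s_2)\zeta_V(s_2,s_3)\rho_2(s_3)\zeta_V(s_3,s_1)\,\d\leb^3(s_1,s_2,s_3),$$
where the split factors $\rho_2^{1/2}$ recombine into one $\rho_2$ at each node. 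The estimator $\hat\gamma_{n_1,n_2}$ is precisely this integral with $\zeta_V$ replaced by $\hat\zeta_{n_1,n_2}$. I would then invoke the uniform consistency $\sup_{[t_1,t_2]^2}|\hat\zeta_{n_1,n_2}-\zeta_V|\oPo 0$ proved in the Appendix: writing $\hat\gamma_{n_1,n_2}-\sum_j\lambda_j^3$ via the telescoping identity $\hat\zeta\hat\zeta\hat\zeta-\zeta_V\zeta_V\zeta_V=(\hat\zeta-\zeta_V)\hat\zeta\hat\zeta+\zeta_V(\hat\zeta-\zeta_V)\hat\zeta+\zeta_V\zeta_V(\hat\zeta-\zeta_V)$, each term is bounded by $\leb^3([t_1,t_2]^3)\,\|\rho_2\|_\infty^3$ times bounded sup-norms of $\zeta_V$ and $\hat\zeta_{n_1,n_2}$ and one factor $\sup|\hat\zeta_{n_1,n_2}-\zeta_V|=o_p(1)$, so $\hat\gamma_{n_1,n_2}\oPo\sum_j\lambda_j^3$. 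Combining with the consistency of $\hat\sigma^2_{n_1,n_2}$ for $\sigma^2=2\sum_j\lambda_j^2$ from Theorem~\ref{thm:box_approx} and applying the continuous mapping theorem yields $\hat\kappa=\hat\sigma^6_{n_1,n_2}/(8\hat\gamma^2_{n_1,n_2})\oPo\sigma^6/(8(\sum_j\lambda_j^3)^2)=\kappa$, using $\sum_j\lambda_j^3>0$.

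I expect the main obstacle to be the rigorous justification of the trace identity $\sum_j\lambda_j^3=\tr(K^3)$ and the interchange of summation with the triple integration. This requires $K$ to be trace class rather than merely Hilbert-Schmidt, which is exactly why continuity of both $\rho_2$ and $\zeta_V$ is indispensable; it is the same analytic input already underlying the spectral representation~\eqref{eq: dist wchi2} in Theorem~\ref{thm:test statistics}, so I would cross-reference that argument rather than redevelop the Mercer machinery from scratch.
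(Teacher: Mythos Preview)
Your proposal is correct and follows essentially the same route as the paper: derive $\kappa$ by matching the third standardized moment (the paper computes $\E[(Q-\E Q)^3]=8\sum_j\lambda_j^3$ directly, you phrase it via cumulants), rewrite $\sum_j\lambda_j^3$ as the triple integral of $\zeta_V$ through Mercer's expansion and orthonormality, and then combine uniform consistency of $\hat\zeta_{n_1,n_2}$ (Corollary~\ref{cor:unif_conv_zeta_hat}) with the consistency of $\hat\sigma_{n_1,n_2}^2$ from Theorem~\ref{thm:box_approx}. Your trace-class language and explicit telescoping bound are slightly more detailed than the paper's one-line invocation of the corollary, but the argument is the same.
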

Following Pearson (1959), an approximative test for $H_=$ vs. $H_{\neq}$ is given by
  \begin{align*}
    \varphi^{P} = \left\{
    \begin{array}{rlcr}
      1 && > & \\
	&	T_{\textnormal{stud}}^{CvM}	& & c^{P}  \\
      0	&& \leq &
    \end{array}
    \right.
  \end{align*}
  where $c^{P}( \cdot )$ is the $(1-\alpha)$-quantile of $\chi_{\hat{\kappa},\textnormal{stud}}^2.$

Since the Pearson-type approximation additionally matches the skewness in the limit, it is expected 
to be the superior to the Box-type approximation. 
However, the additional parameter estimator $\hat \gamma_{n_1,n_2}$ 
may also lead to a greater inaccuracy. 
In order to check the tests' actual performances, 
we investigate both approximation procedures and the wild bootstrap tests in the next section.

%
\section{Simulations}\label{sec: sim}
%

The previous section coped with three kinds of statistical tests for the hypotheses $H_=$ vs. $H_{\neq}$:
\begin{enumerate}
  \item Asymptotically (as $n \rightarrow \infty$) consistent tests using wild bootstrap techniques.
  \item Approximative tests mimicking the asymptotic distribution 
  of the Cram\'er-von Mises test statistic while estimating the relevant parameters.
  \item A possibly asymptotically inconsistent Pepe-type test using wild bootstrap techniques.
\end{enumerate}
All methods intend to give good small sample results with regard to level $\alpha$ control,
while the wild bootstrap tests shall clearly outperform the approximative tests for sample sizes going to infinity.
This is due to the approximative nature of those tests; their critical values will not be exact in the limit.
On the other hand, a good approximation might yield critical values 
close to the actual quantiles of the test statistic --
if the involved point estimators are reliable.
In this case it is conceivable that the approximative tests may outperform the wild bootstrap tests.
Keeping the type-I error rate in mind, 
we are further interested in the small sample power of the above tests.
{\color{black}To have another asymptotic reference test, we followed the suggestion of a referee to also include Gray's (1988) test.}

To investigate the actual small sample behaviour of all considered tests,
we consider the following {\color{black}two} set-ups: 
Each simulation was carried out utilizing 
the R-computing environment, version 2.15.0 (R Development Core Team, 2010)
with $N_{sim} = 1000$ simulation runs.
Additionally, all resampling tests were established with $B=999$ bootstrap runs in each of the $N_{sim}$ steps.
For all of the following set-ups the nominal size is $\alpha = 5\%$.
\begin{enumerate}
 \item Model 1 of Bajorunaite and Klein (2007) is given by the CIFs for both risks as
  \begin{align*}
   F_1^{(k)}(t) = \frac{p (1 - e^{-t} ) e^{\beta Z}}{1 - p + p ( 1 - e^{-t} ) e^{\beta Z} }
   \quad \text{and} \quad F_2^{(k)}(t) = \frac{(1-p) (1 - e^{-t} )}{1 - p + p e^{\beta Z} }
  \end{align*}
  where $Z = 1$ for sample $k=2$ and $Z=0$ for sample $k=1$.
  In the case $\beta = 0$ the authors pointed out that 
  all CIFs and cumulative hazard functions are equal among both groups
  and thus the null hypothesis is implied.
  The alternative is for example true in our simulations for $\beta = 0.75$.
  The parameter $p \in (0,1)$ specifies the proportion of type 1 events and,
  following Bajorunaite and Klein again, have chosen to be $p = 0.25, 0.5, 0.75$ for the null hypothesis
  and $p = 0.18, 0.41, 0.68$ for the alternative.
  The data is independently right-censored by several different uniform $U(a,b)$-censoring distributions with $a < b$ chosen in a way
  to have $0\%, 25\%$ or $50\%$ censoring -- in the case of untruncated data; 
  see Table 1 of Bajorunaite and Klein (2007) for details.
  We, however, also let $75\%$ of all individuals of each group be independently left-truncated 
  by a gamma-truncation distribution with scale parameter 1.5 and shape parameter 0.75;
  see Beyersmann et al. (2013) for a similar set-up.
  The simulated sample sizes are chosen as $(n_1,n_2) = (20,20), (50,50), (50,100), (100,50),(100,100), (200,200)$
  and the event-time interval of interest is $[t_1,t_2] = [0,3]$.
  {\color{black}Gray's (1988) test has not been included in this part of the simulation study
  since its theory has not been developed for left-truncated data.}
 \item A set-up with crossing CIFs for the first risk is given by Model 2 of Bajorunaite and Klein (2007)
  with $\beta = \log 3, p^{(1)} = 0.42, p^{(2)} = 0.58$ and both competing risks and sample groups interchanged, that is,
  \begin{align*}
   F_1^{(k)}(t) = (1 - p^{(k)}) (1 - e^{-t})^{e^{\beta Z}} \quad \text{and} \quad
   F_2^{(k)}(t) = p^{(k)} (1 - e^{-t}).
  \end{align*}
  For these simulations we have complete observations and the same sample group sizes as in our first simulation set-up.
  As interval of interest we chose $[t_1,t_2]$ approximately equal to $[0,3.8]$ 
  so that the integral of $F^{(1)}(t) - F^{(2)}(t)$ over this area is close to zero.
  In this case we expect that the statistic of the Pepe-type test tends to have very small values
  resulting in a poor power in this set-up.
\end{enumerate}
{\color{black} Additionally, a third simulation design, adopted from Dobler and Pauly (2014), 
is conducted in the supplementary material; see Appendix~\ref{sec: appA}.}
Remember that $75\%$ of all individuals are left-truncated in the first set-up.
This might lead to extremely small sample groups 
so that most of the tests do not keep the nominal size $\alpha = 5\%$ by far, especially if censoring is present, too.
In each simulation run with no event of the first risk we did not reject the null hypothesis.
See Table~\ref{table:niveau_BK_1} for the results which are commented subsequently.
In these simulations for the null hypothesis in the first set-up we observe
that the Pepe-type test $\varphi^{Pepe}$ achieved the type-I error probabilities closest to the nominal size $\alpha = 5\%$, 
especially for stronger censoring scenarios.
Apart from that, the approximative tests $\varphi^{P}$ and $\varphi^{B}$ tend to be closer to the nominal level 
compared to the related Cram\'er-von Mises test $\varphi^{CvM}$.
Overall, however, the differences between those three tests do not really stand out.
The Kolmogorov-Smirnov test $\varphi^{KS}$ shows the largest deviation from the nominal level in most cases,
whereas this test outperforms the tests based on the Cram\'er-von Mises statistic in some situations.

Keeping these results in mind we now compare the achieved powers in the first set-up; see Table~\ref{table:power_BK_1}.
In this case $\varphi^{Pepe}$ utterly disappoints by having even lower rejection rates than under the null.
We observe the same behaviour for the remaining tests, but these situations seem to be exceptional.
All in all, these four tests have satisfactory power (having the small samples in mind).
In most scenarios $\varphi^{KS}$ achieved the greatest power by far 
which, however, may be explained by the quite liberal behaviour under the null.
Again, $\varphi^{P}, \varphi^{B}$ and $\varphi^{CvM}$ differ not that much,
where the latter shows a slight tendency for a greater power compared to the approximative tests. 
{\color{black} It should be pointed out that the increase of the power with larger censoring rate in Table~\ref{table:power_BK_1} is due to 
the larger (true) nominal level in this situation; see Table~\ref{table:niveau_BK_1}.

Recall that Gray's (1988) test was not applied for the first setting since it is not designed for truncated data. 
This is different in the second scenario of } 
crossing CIFs and complete observations. Here the first four tests {\color{black}possess} a similar power behaviour; see Table~\ref{table:power_BK_2}:
$\varphi^{KS}$ has the greatest and $\varphi^{CvM}$ has the second greatest power, 
shortly followed by $\varphi^{B}$ and then $\varphi^{P}$.
As expected, the Pepe-type test does not possess the power to detect this alternative 
since the integral in the statistic cancels out the differences between the Aalen-Johansen estimators. In particular, its 
power is close to the nominal level. 
{\color{black}A similar lack of power is observed for Gray's test $\varphi^{Gray}$ for smaller samples. 
In contrast to the test of Pepe-type, it at least increases for larger sample sizes up to $n_1 = n_2 = 200$. 
However, the difference in terms of power in comparison to the first four tests is unacceptable.}


\begin{sidewaystable}
\begin{center}
\setlength{\tabcolsep}{5pt}
\begin{tabular}[h]{c|c|c|c|c|c|c|c|c|c|c|c|c|c|c|c|c}
\hline
 \multicolumn{2}{r|}{Censoring} & \multicolumn{5}{|c|}{0 \%} & \multicolumn{5}{|c|}{25 \%} & \multicolumn{5}{|c}{50 \%}  \\
 $(n_1,n_2)$ & $p$  & $\varphi^{KS}$ & $\varphi^{CvM}$ & $\varphi^{P}$ & $\varphi^{B}$ & $\varphi^{Pepe}$ & $\varphi^{KS}$ & $\varphi^{CvM}$ & $\varphi^{P}$ & $\varphi^{B}$ & $\varphi^{Pepe}$ & $\varphi^{KS}$ & $\varphi^{CvM}$ & $\varphi^{P}$ & $\varphi^{B}$ & $\varphi^{Pepe}$ \\\hline
	  & 0.25 & .076 & .066 & \bf .065 & \bf .065 & .074 & .105 & .096 & .095 & .095 & \bf .066 & .183 & .159 & .159 & .158 & \bf .116 \\
$(20,20)$ & 0.50 & .083 & .084 & .085 & .085 & \bf .062 & .089 & .099 & .093 & .093 & \bf .069 & .181 & .157 & .155 & .154 & \bf .094 \\
	  & 0.75 & \bf .091 & .101 & .100 & .099 & \bf .091 & .097 & .112 & .111 & .112 & \bf .080 & .213 & .246 & .242 & .242 & \bf .133 \\\hline
	  & 0.25 & \bf .056 & \bf .044 & .043 & .043 & .040 & .084 & .060 & .063 & .063 & \bf .052 & .195 & .139 & .139 & .139 & \bf .094 \\
$(50,50)$ & 0.50 & .055 & .054 & .052 & \bf .051 & .053 & .086 & .077 & .077 & .077 & \bf .072 & .180 & .137 & .133 & .133 & \bf .084 \\
	  & 0.75 & .096 & .096 & .093 & .093 & \bf .078 & .090 & .132 & .126 & .126 & \bf .083 & .208 & .207 & .202 & .202 & \bf .124 \\\hline
	  & 0.25 & .079 & .070 & .070 & .070 & \bf .035 & .106 & .070 & .070 & .070 & \bf .039 & .203 & .156 & .158 & .158 & \bf .078 \\
$(50,100)$ & 0.50 & .056 & .048 & \bf .049 & \bf .049 & .038 & .101 & .086 & .081 & .081 & \bf .056 & .216 & .166 & .166 & .166 & \bf .095 \\
	  & 0.75 & \bf .073 & .089 & .087 & .086 & .074 & \bf .080 & .130 & .127 & .127 & .110 & .217 & .212 & .209 & .209 & \bf .138 \\\hline
	  & 0.25 & .064 & \bf .050 & .051 & .051 & .075 & .104 & \bf .069 & .070 & .070 & .086 & .214 & .154 & .150 & .150 & \bf .131 \\
$(100,50)$ & 0.50 & .056 & .056 & .055 & \bf .054 & .057 & .087 & .061 & \bf .060 & \bf .060 & .077 & .212 & .150 & .150 & .150 & \bf .118 \\
	  & 0.75 & .084 & .088 & .089 & .089 & \bf .064 & .089 & .129 & .122 & .122 & \bf .084 & .218 & .222 & .221 & .221 & \bf .147 \\\hline
	  & 0.25 & .059 & \bf .049 & .045 & .045 & .041 & .096 & .059 & \bf .053 & \bf .053 & .045 & .183 & .117 & .116 & .116 & \bf .070 \\
$(100,100)$ & 0.50 & \bf .048 & .056 & .055 & .055 & .047 & .075 & .075 & .071 & .071 & \bf .068 & .202 & .168 & .157 & .155 & \bf .105 \\
	  & 0.75 & .073 & .080 & .081 & .081 & \bf .064 & \bf .093 & .128 & .129 & .129 & .099 & .215 & .222 & .224 & .224 & \bf .155 \\\hline
	  & 0.25 & .061 & .054 & .053 & \bf .052 & .046 & .106 & .064 & .058 & .058 & \bf .052 & .175 & .104 & .102 & .102 & \bf .075 \\
$(200,200)$ & 0.50 & .043 & .047 & \bf .048 & \bf .048 & .053 & .090 & .077 & \bf .070 & \bf .070 & \bf .070 & .168 & .115 & .111 & .111 & \bf .079 \\
	  & 0.75 & .063 & .069 & .068 & .068 & \bf .054 & .099 & .122 & .120 & .120 & \bf .089 & .210 & .226 & .223 & .223 & \bf .137 \\
\end{tabular}
\caption{(Model 1 of Bajorunaite and Klein (2007))
  Simulated sizes of the resampling tests $\varphi^{KS},\varphi^{CvM},\varphi^{Pepe}$
  and the approximative tests $\varphi^P, \varphi^B$ for nominal size $\alpha=5\%$ 
  under different sample sizes and right-censoring distributions and a fixed left-truncation distribution 
  under $H_{=}$.}
\label{table:niveau_BK_1}
\end{center}
\end{sidewaystable}


\begin{sidewaystable}
\begin{center}
\setlength{\tabcolsep}{5pt}
\begin{tabular}[h]{c|c|c|c|c|c|c|c|c|c|c|c|c|c|c|c|c}
\hline
 \multicolumn{2}{r|}{Censoring} & \multicolumn{5}{|c|}{0 \%} & \multicolumn{5}{|c|}{25 \%} & \multicolumn{5}{|c}{50 \%}  \\
 $(n_1,n_2)$ & $p$  & $\varphi^{KS}$ & $\varphi^{CvM}$ & $\varphi^{P}$ & $\varphi^{B}$ & $\varphi^{Pepe}$ & $\varphi^{KS}$ & $\varphi^{CvM}$ & $\varphi^{P}$ & $\varphi^{B}$ & $\varphi^{Pepe}$ & $\varphi^{KS}$ & $\varphi^{CvM}$ & $\varphi^{P}$ & $\varphi^{B}$ & $\varphi^{Pepe}$ \\\hline
	  & 0.18 & .089 & .076 & .079 & .079 & .044 & .078 & .075 & .074 & .074 & .038 & .178 & .179 & .175 & .175 & .078 \\
$(20,20)$ & 0.41 & .090 & .078 & .081 & .081 & .047 & .112 & .108 & .111 & .111 & .042 & .202 & .186 & .184 & .184 & .088 \\
	  & 0.68 & .139 & .105 & .104 & .104 & .076 & .145 & .143 & .144 & .144 & .089 & .234 & .241 & .241 & .242 & .123 \\\hline
	  & 0.18 & .098 & .092 & .089 & .087 & .017 & .153 & .122 & .117 & .117 & .024 & .234 & .171 & .169 & .168 & .041 \\
$(50,50)$ & 0.41 & .095 & .087 & .084 & .084 & .037 & .113 & .085 & .085 & .085 & .037 & .224 & .187 & .183 & .183 & .065 \\
	  & 0.68 & .137 & .081 & .078 & .078 & .064 & .155 & .134 & .135 & .134 & .072 & .252 & .208 & .200 & .200 & .104 \\\hline
	  & 0.18 & .141 & .121 & .122 & .122 & .004 & .202 & .159 & .158 & .158 & .010 & .330 & .248 & .243 & .243 & .028 \\
$(50,100)$ & 0.41 & .106 & .072 & .075 & .075 & .018 & .129 & .086 & .083 & .083 & .029 & .247 & .181 & .179 & .178 & .060 \\
	  & 0.68 & .161 & .075 & .075 & .075 & .052 & .175 & .126 & .125 & .124 & .058 & .302 & .224 & .218 & .218 & .104 \\\hline
	  & 0.18 & .084 & .084 & .080 & .080 & .025 & .114 & .081 & .079 & .079 & .024 & .217 & .159 & .158 & .158 & .051 \\
$(100,50)$ & 0.41 & .080 & .081 & .082 & .081 & .037 & .098 & .083 & .083 & .083 & .072 & .201 & .143 & .137 & .137 & .072 \\
	  & 0.68 & .142 & .098 & .100 & .100 & .058 & .160 & .112 & .113 & .113 & .064 & .290 & .217 & .211 & .211 & .104 \\\hline
	  & 0.18 & .139 & .122 & .123 & .123 & .005 & .169 & .126 & .123 & .123 & .006 & .330 & .223 & .226 & .226 & .025 \\
$(100,100)$ & 0.41 & .103 & .074 & .073 & .073 & .027 & .136 & .092 & .090 & .090 & .036 & .244 & .171 & .171 & .170 & .055 \\
	  & 0.68 & .214 & .100 & .097 & .097 & .062 & .213 & .120 & .121 & .121 & .060 & .335 & .219 & .216 & .215 & .113 \\\hline
	  & 0.18 & .187 & .172 & .173 & .173 & .004 & .279 & .226 & .228 & .228 & .007 & .385 & .278 & .272 & .272 & .011 \\
$(200,200)$ & 0.41 & .127 & .077 & .077 & .077 & .012 & .157 & .094 & .099 & .099 & .014 & .307 & .207 & .205 & .205 & .042 \\
	  & 0.68 & .332 & .112 & .105 & .105 & .045 & .362 & .168 & .164 & .164 & .050 & .491 & .281 & .273 & .272 & .104 \\
\end{tabular}
\caption{(Model 1 of Bajorunaite and Klein (2007)) 
  Simulated power of the resampling tests $\varphi^{KS},\varphi^{CvM},\varphi^{Pepe}$
  and the approximative tests $\varphi^P, \varphi^B$ for nominal size $\alpha=5\%$ 
  under different sample sizes and right-censoring distributions and a fixed left-truncation distribution 
  under $H_{\neq}$.}
\label{table:power_BK_1}
\end{center}
\end{sidewaystable}

%
%
%
%
%


\begin{table}
\begin{center}
\setlength{\tabcolsep}{5pt}
\begin{tabular}[h]{c|c|c|c|c|c|c}
\hline 
$(n_1,n_2)$	& $\varphi^{KS}$ & $\varphi^{CvM}$ & $\varphi^{P}$ & $\varphi^{B}$ & $\varphi^{Pepe}$ & $\varphi^{Gray}$ \\\hline
$(20,20)$ 	& .241 & .139 & .134 & .134 & .085 & .059 \\
$(50,50)$ 	& .366 & .175 & .171 & .172 & .065 & .094 \\
$(50,100)$ 	& .479 & .259 & .264 & .266 & .056 & .119 \\
$(100,50)$ 	& .535 & .226 & .220 & .220 & .067 & .078 \\
$(100,100)$ 	& .718 & .455 & .436 & .439 & .054 & .166 \\
$(200,200)$ 	& .976 & .913 & .915 & .916 & .050 & .328 \\
\end{tabular}
\caption{(Model 2 of Bajorunaite and Klein (2007) with crossing CIFs)
  Simulated power of the resampling tests $\varphi^{KS},\varphi^{CvM},\varphi^{Pepe}$,
  {\color{black}the approximative tests $\varphi^P, \varphi^B$
  and the asymptotic test $\varphi^{Gray}$}
  for nominal size $\alpha=5\%$ 
  under different sample sizes and complete observations.}
\label{table:power_BK_2}
\end{center}
\end{table}

{\color{black}
The poor power of $\varphi^{Pepe}$ in the first scenario as well as its great power in the third scenario (see the supplement) is easily explained:}
As already mentioned in Remark~\ref{rem:test statistics}, $\varphi^{Pepe}$ is actually a one-sided test for ordered CIFs, 
that is, for the null hypothesis $H_{\leq} : \{ F^{(1)} \leq F^{(2)} \text{ on } [t_1,t_2] \}$
versus $H_{\gneqq} : \{ F^{(1)} \geq F^{(2)} \text{ on } [t_1,t_2] \; \& \; \{ F^{(1)} \neq F^{(2)} \}$;
see also Dobler and Pauly (2014).
This detail is the reason why $\varphi^{Pepe}$ had such a low power in Table~\ref{table:power_BK_1}
and such a great power in Table~\ref{table:power_DP} {\color{black}in Appendix~\ref{sec: appA}}:
The power simulations for the first set-up considered CIFs contained in $H_{\leq}$
whereas those of the third set-up are covered by $H_{\gneqq}$.
A solution for avoiding such problems is given by utilizing a two-sided version of the Pepe-type test,
for example, by taking the absolute value of the statistic.
This would result in a gain of power in Table~\ref{table:power_BK_1}
but presumably in a loss of power in Table~\ref{table:power_DP} {\color{black}in Appendix~\ref{sec: appA}}.
Furthermore, this {\color{black}modified} test would still be unable to detect crossing CIFs as those in the second set-up.
To solve this, one may consider the absolute value in the integrand of Pepe's statistic,
resulting in a statistic very similar to the Cram\'er-von Mises statistic.

All in all, we advise not to choose $\varphi^{Gray}$ {\color{black}nor} $\varphi^{Pepe}$ nor a modifiaction 
as described above for testing $H_=$ against $H_{\neq}$ due to the disability to detect certain types of alternatives as, e.g., crossing CIFs. 
We furthermore also suggest not to use $\varphi^{KS}$ 
since this test indicated a too liberal behaviour,
especially in cases with moderate to strong censoring; see Table~\ref{table:niveau_BK_1}.
The remaining simulation results showed a slight but numerous superiority of the approximative tests over $\varphi^{CvM}$
when it comes to maintain a prescribed level.
This is indicated in the first scenario as well as in the additional simulation study conducted in the supplementary material, that is, 
in an uncensored and moderately censored scenario, as well as in a scenario with heavy censoring and strong truncation, even for small samples.
On the other hand, $\varphi^{CvM}$ has a slightly greater power than the approximative tests
in most cases (but not in all). 
Due to the focus on maintaining a nominal level $\alpha$,
we therefore advise the utilization of $\varphi^{P}$ or $\varphi^{B}$ for testing $H_=$ --
at least for sample sizes up to $n_1 = n_2 = 200$.
Due to the asymptotically consistency of $\varphi^{CvM}$,
this test will eventually have better type-I error probabilities for very large samples. 
However, it is computationally much more expensive than $\varphi^{P}$ and $\varphi^{B}$ due to the involved 999 Bootstrap Monte Carlo steps.
Both approximative tests $\varphi^{P}$ and $\varphi^{B}$ have an almost equally good performance
so that we cannot detect a clear preference for one over the other. 

Finally, it should be mentioned that, for ordered CIF alternatives, directional tests as $\varphi^{Pepe}$ should possess a larger power since they are constructed 
to detect this smaller class of alternatives. This is, e.g., evident in the simulation design given in the supplement, 
where the test of Pepe-type outperforms the others.

%
\section{{\color{black}Example: Application to BSI data}}\label{sec: data_ex}
%

{\color{black}

The presented testing procedures were all applied to a data-set from the prospective multi-centre cohort study ONKO-KISS about bloodstream infection (BSI)
during neutropenia. 
In the original data-set provided in Dettenkofer et al. (2005)~\nocite{dettenkofer05} a total
number of 1,899 patients were included in the study, 
each having undergone a peripheral blood stem-cell transplantation in 18 different hospitals in Austria, Germany and Switzerland. 
The competing risks in this study are BSI during neutropenia and end of neutropenia or death, both without a preceding BSI.
By combining the latter two events into the single endpoint ``no BSI'',
the theory of the present article becomes available.
The data has, e.g., been analyzed by Beyersmann et al. (2007) or Meyer et al. (2007) in the competing risks context, 
where also medical circumstances and a description of the study are given in detail. 
Here we supplement the analysis by studying the following four questions: Are there differences with respect to the CIFs for BSI between
\begin{itemize}
 \item[(i)] allogeneic and autologous transplants,
 \item[(ii)] female and male groups,
 \item[(iii)] allogeneic and autologous transplants among all \emph{female} patients,
 \item[(iv)] allogeneic and autologous transplants among all \emph{male} patients?
\end{itemize}
Due to free availability we only consider a random subsample of 1,000 patients which is, e.g., 
provided in the \verb=R=-package \verb=compeir= via the data-set \emph{okiss}. 
Of this subsample 564 had undergone allogeneic transplantation and 436 had autologous transplants. 
Moreover, 381 patients were of female and 619 of male gender.
Table~\ref{table:proportions} gives the sample sizes of all transplant type- and gender-specific subgroups.
\begin{table}[H]
\begin{center}
  \begin{tabular}{c||ccc|ccc||c} 
    Subgroup & \multicolumn{3}{|c|}{female} & \multicolumn{3}{|c||}{male} &  \\ \hline
    with event & BSI & no BSI & censored & BSI & no BSI & censored & total \\ \hline \hline
    allogeneic 	& 50 (21.8) & 177  (77.3) & 2 (0.9) & 70 (20.9) & 259 (77.3) & 6 (1.8) & 564 \\
    total	& \multicolumn{3}{|c|}{229} & \multicolumn{3}{|c||}{335} & \\ \hline
    autologous 	& 20 (13.2) & 130 (85.5) & 2 (1.3) & 63 (22.2) & 218 (76.8) & 3 (1.1) & 436 \\
    total	& \multicolumn{3}{|c|}{152} & \multicolumn{3}{|c||}{284} & \\ \hline
    total 	& \multicolumn{3}{|c|}{381} & \multicolumn{3}{|c||}{619} & 1,000 \\
  \end{tabular}
  \caption{{\color{black}Sample sizes of subgroups in the \emph{okiss} data-set (and their rounded proportions in each category in per cent)}}
\label{table:proportions}
\end{center}
\end{table}
As time interval of most interest we have chosen the first five weeks after transplantation, i.e., $[t_1,t_2] = [0, 35]$ in days,
since already 98.1 per cent of all event and censoring times are contained within this period.
The available event times are only right-censored (to a minimal degree of 1.3 per cent) and \emph{not} left-truncated.
Thus, we are able to include all of the six tests discussed earlier in the data analysis. 
In order to meet the assumptions of the theory developed earlier in this article
occurring ties in the event times have been broken by adding normally distributed values with a very small standard deviation.

The estimated CIFs for the analyses of subgroups (i) and (ii) are illustrated in Figure~\ref{fig:data_ex1}(a)-(b)
and the $p$ values for all tests comparing those subgroups are given in the table in Figure~\ref{fig:data_ex1}(c).
Due to crossing Aalen-Johansen estimators for the transplant type-specific CIFs it is not surprising that $\varphi^{Pepe}$ and $\varphi^{Gray}$ 
do not detect any difference at nominal level of $\alpha = 5\%$. However, the other tests do also not yield significant results of which the Kolmogorov-Smirnov test provides the smallest $p$ value of $.136$. 
When testing the CIFs for BSI gender-specifically, the Pepe-type test yields the smallest $p$ value of $.087$
which is again expected in a situation where the Aalen-Johansen estimates indicate an alternative of ordered CIFs.
The $p$ values of the remaining tests are more or less of an equal size. In any case the differences of the CIFs in (i) and (ii) seem to be too small to be detected as significantly distinct.  

The situation is much different after first having divided the data-set according to gender 
and then comparing the CIFs for BSI of both transplant types for each of the genders female (iii) and male (iv).
Figure~\ref{fig:data_ex2}(a) indicates a strong difference of these CIFs (and also an ordering of those) among the group of women (iii).
Thus, all tests yield borderline $p$ values, where the Pepe-type test is the only one below the nominal level of $5\%$. 
The group of male patients (iv) does not show such a huge difference between both Aalen-Johansen estimates but also an alternative of ordered CIFs is indicated -- here in the opposite direction.
This explains the large $p$ values of $\varphi^{Pepe}, \varphi^{Gray}$. In comparison, the tests $\varphi^{CvM}, \varphi^{B}, \varphi^{P}$ based on a Cram\'er-von Mises statistic lead to smaller 
(but also not significant) $p$ values around $.2$. Finally, the Kolmogorov-Smirnov test with a $p$ value of $.019$ is the only one with a clearly significant decision for the alternative of unequal CIFs.
This is in line with our findings from Section~\ref{sec: sim}, where the wild bootstrap Kolmogorov-Smirnov test had the largest power of all tests. Here its liberality may not be such a big issue due 
to the large sample size and almost negligible censoring rates.

Note, that a thorough study of the above examined CIFs with thus many comparisons would of course require a multiple testing adjustment.
Our aim, however, was to illustrate and compare the performances of all discussed testing procedures in a real data example
in order to confirm our conjectures concerning the advantages and disadvantages of all analyzed statistical techniques.
The considered Aalen-Johansen estimators also indicate a gender-specific influence on the CIFs for BSI between both transplant types
which may be detected with larger sample sizes.
}


\begin{figure}[ht]
        \centering
        \begin{subfigure}{0.30\textwidth}
                \includegraphics[width=\textwidth]{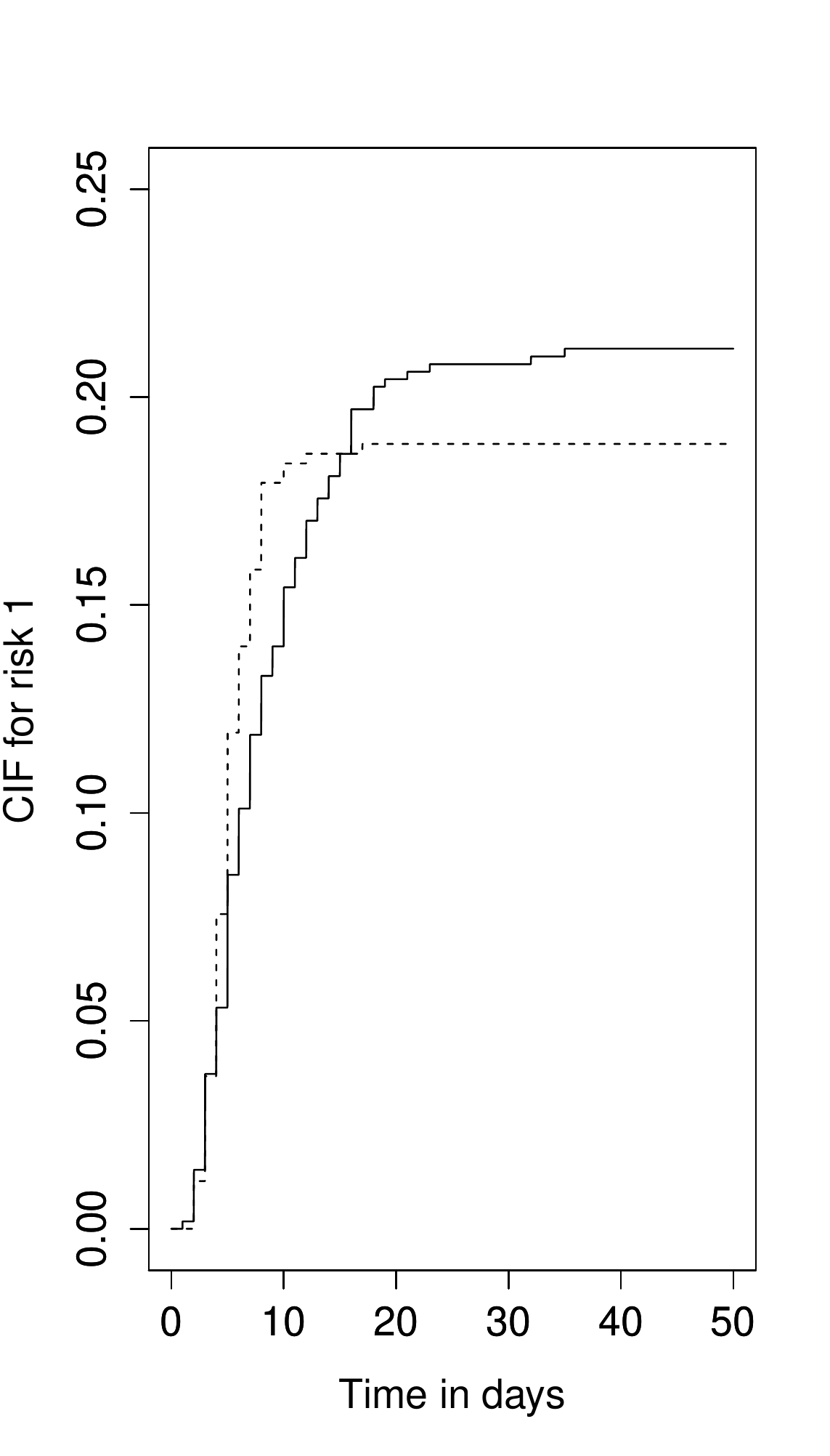}
                \caption{CIFs for BSI depending on transplant type (i):
		  allogeneic \mbox{(----)}, autologous (- - -)}
        \end{subfigure}%
        \quad 
        \begin{subfigure}{0.30\textwidth}
                \includegraphics[width=\textwidth]{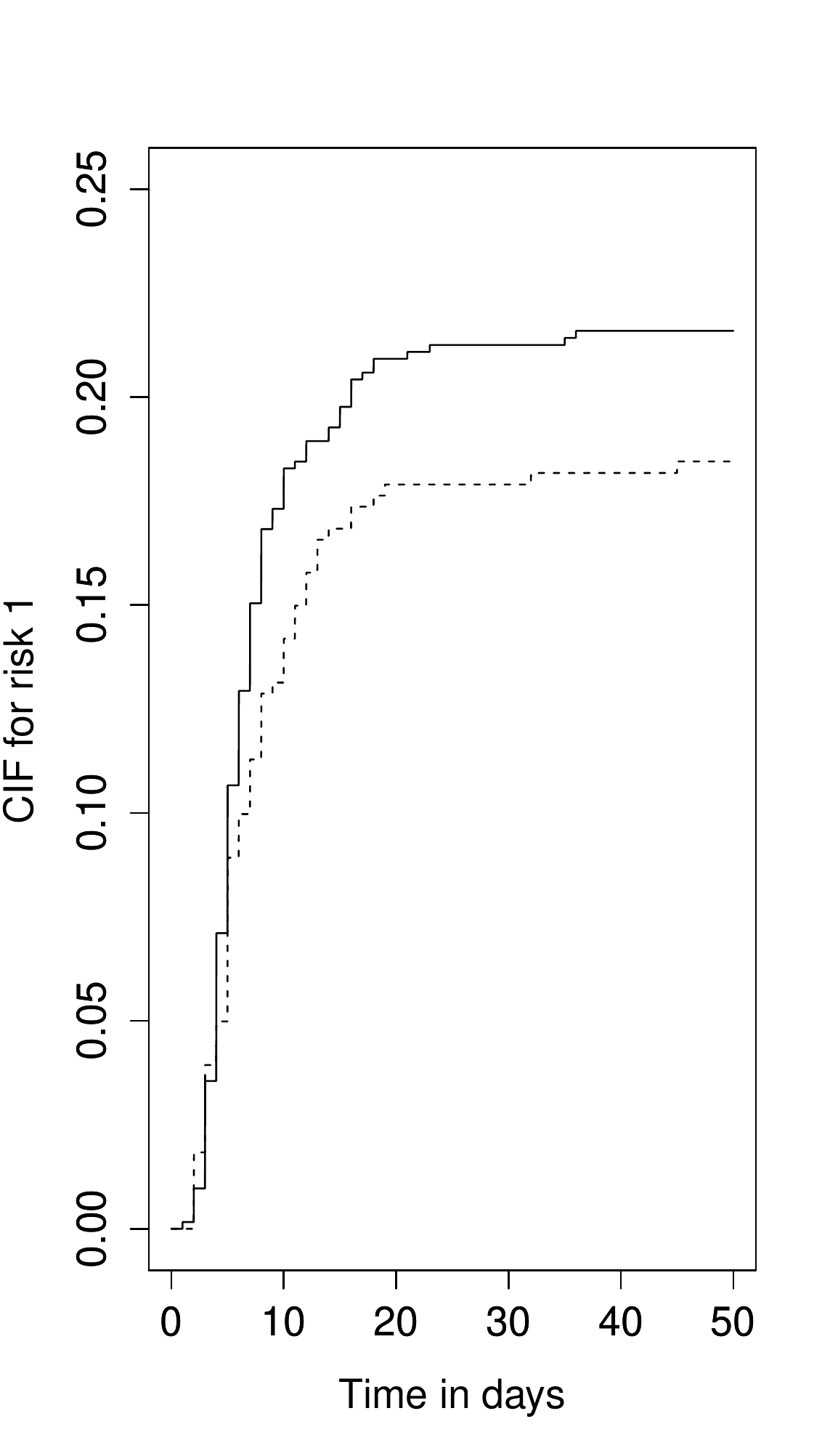}
                \caption{CIFs for BSI depending on gender (ii):
		  males (----), females (- - -)}
        \end{subfigure}
        \quad
	\begin{subtable}{0.30\textwidth}
	  \begin{tabular}{c|c|c}
	    \hline 
	    Test & (i) & (ii) \\ \hline
	    $\varphi^{KS}$ 	& .136 	& .149 \\
	    $\varphi^{CvM}$ 	& .336	& .180 \\
	    $\varphi^{B}$ 	& .314	& .155 \\
	    $\varphi^{P}$ 	& .351	& .183 \\
	    $\varphi^{Pepe}$ 	& .447	& .087 \\
	    $\varphi^{Gray}$ 	& .519	& .210 \\
	    \end{tabular}
	    \caption{$p$ values of six different tests for equal CIFs of BSI
	    between transplant types (i) and genders (ii)}
	\end{subtable}
        \caption{Plots of CIFs for BSI and $p$ values for null hypothesis $H_=$ of equal CIFs.}
	\label{fig:data_ex1}
\end{figure}

\begin{figure}[ht]
        \centering
        \begin{subfigure}{0.30\textwidth}
                \includegraphics[width=\textwidth]{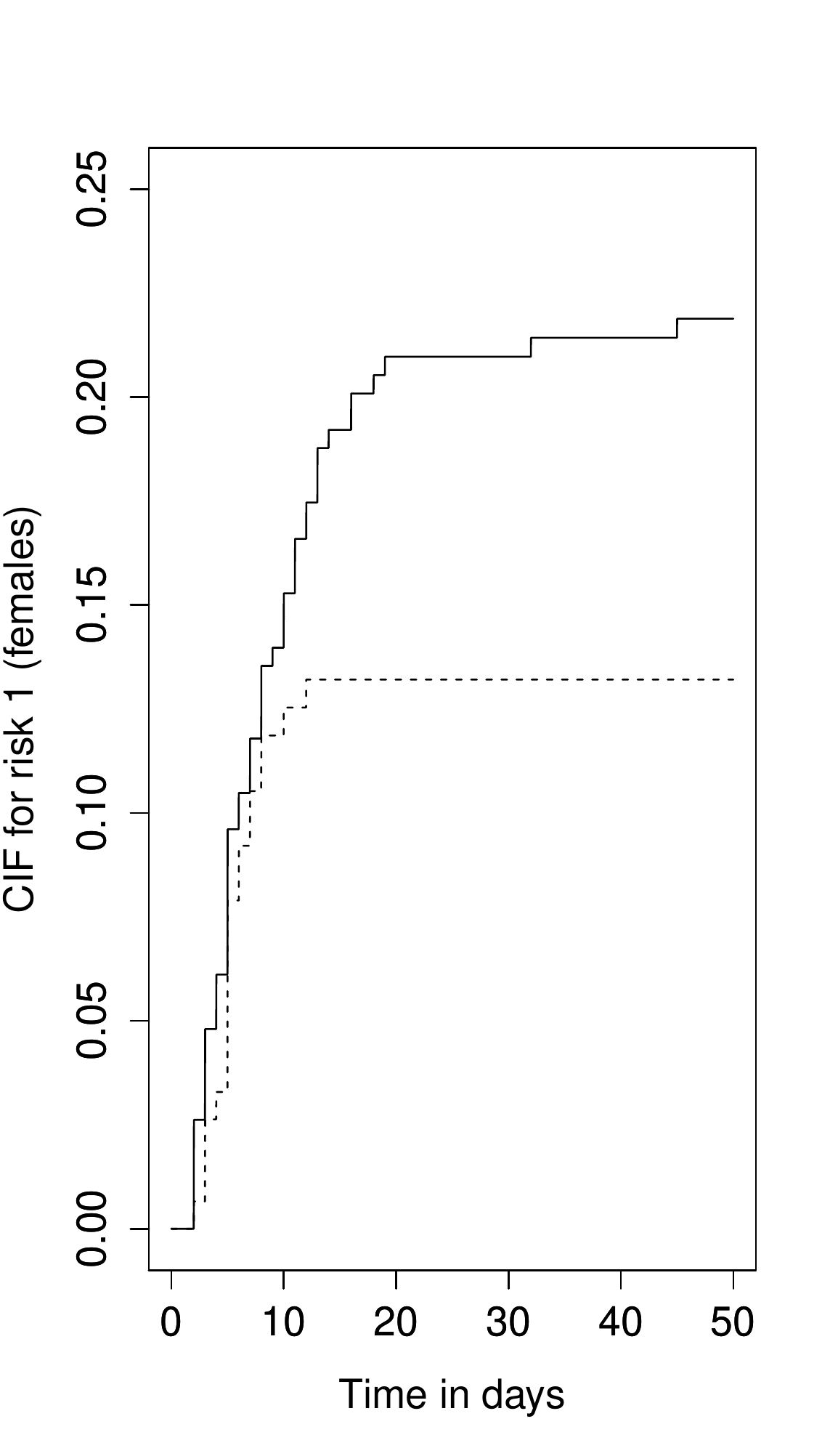}
                \caption{CIFs for BSI in females (iii)}
        \end{subfigure}%
        \quad 
        \begin{subfigure}{0.30\textwidth}
                \includegraphics[width=\textwidth]{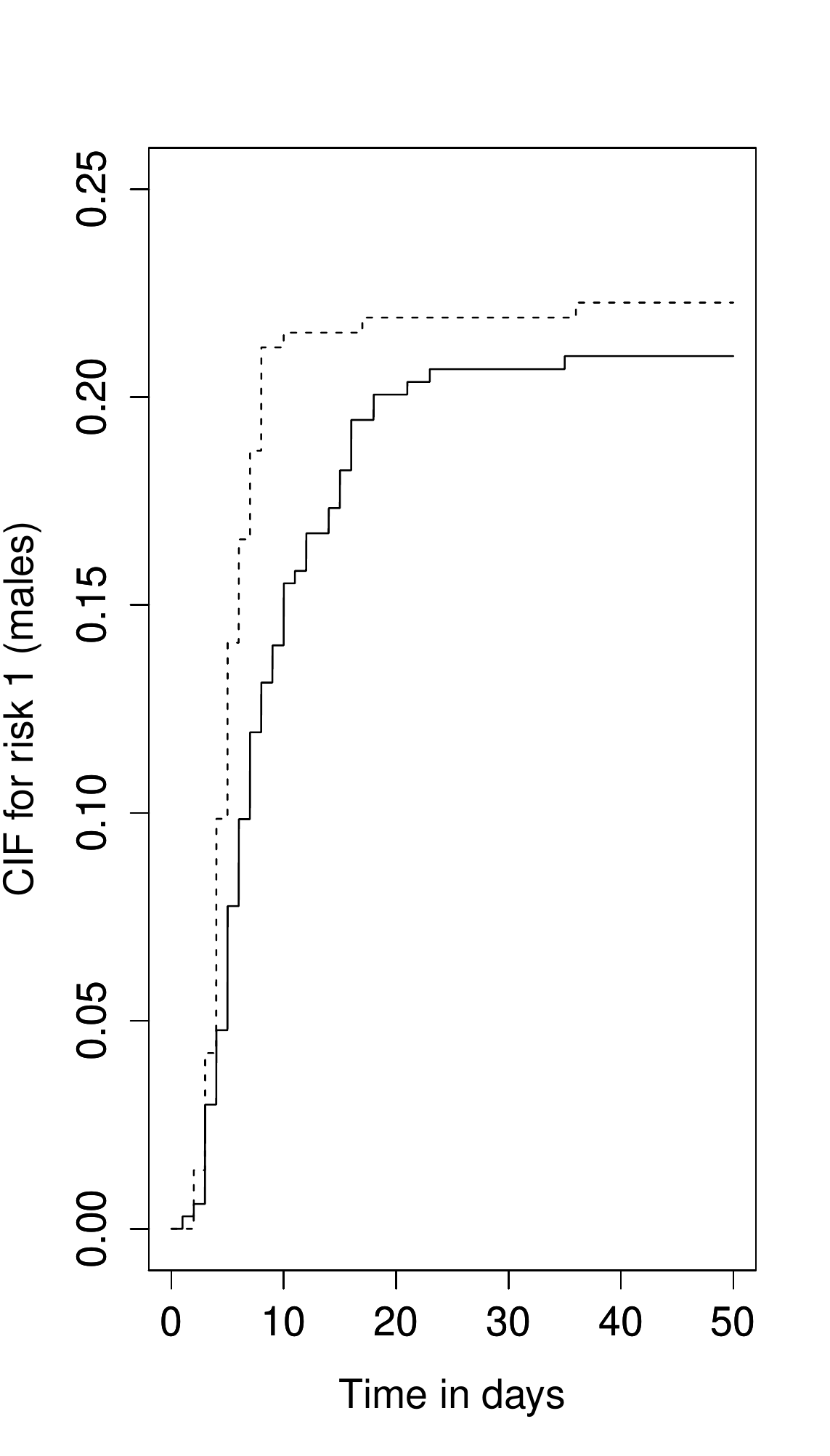}
                \caption{CIFs for BSI in males (iv)}
        \end{subfigure}
        \quad
	\begin{subtable}{0.30\textwidth}
	  \begin{tabular}{c|c|c}
	    \hline 
	    Test & (iii) & (iv) \\ \hline
	    $\varphi^{KS}$ 	& .073 & .019 \\
	    $\varphi^{CvM}$ 	& .069 & .220 \\
	    $\varphi^{B}$ 	& .058 & .193 \\
	    $\varphi^{P}$ 	& .071 & .220 \\
	    $\varphi^{Pepe}$ 	& .046 & .856 \\
	    $\varphi^{Gray}$ 	& .053 & .587 \\
	    \end{tabular}
	    \caption{$p$ values of six different tests for equal CIFs of BSI
	    between transplant types for both genders female (iii) and male (iV) separately}
	\end{subtable}
        \caption{Plots of CIFs for BSI between allogeneic (----) and autologous (- - -) transplants and $p$ values for null hypothesis $H_=$ of equal CIFs
	  after categorization of all individuals according to gender.}
	\label{fig:data_ex2}
\end{figure}

%
\section{Conclusion and Discussion}\label{sec: dis}
%
We have considered the two-sample testing problem of equality of two CIFs from two independent groups.
By only assuming the multiplicative intensity model we thereby have not only covered right-censored observations but also other situations of incomplete data as independent left-truncation.
Moreover, we have discussed and compared different test statistics based on the AJEs of the two groups. 
In particular, we have compared the Kolmogorov-Smirnov-type wild bootstrap 
test proposed in Bajorunaite and Klein (2007) with different Cram\'er-von Mises-type tests based on the wild bootstrap or different approximation techniques. 

Here the latter has not been investigated in the survival literature yet. 
All of these four tests possess asymptotic power 1, where the wild bootstrap-based versions are even asymptotically exact under the null. 
Simulations for all tests under study indicate that there is a slight but no strong preference for the wild bootstrap-based Cram\'er-von Mises test $\varphi^{CvM}$ 
for all sample sizes under consideration. In comparison the approximative Cram\'er-von Mises tests have shown an almost equally good behaviour. 
In contrast, the wild bootstrap Kolmogorov-Smirnov-type test $\varphi^{KS}$ did not seem to keep the level $\alpha$ very well in the considered set-ups.

As a concluding remark, we like to remind the reader of the advantages and disadvantages of the proposed tests.
The most important fact is the asymptotic validity of $\varphi^{KS}$ and $\varphi^{CvM}$
whereas the approximative tests $\varphi^P$ and $\varphi^B$ are no asymptotic level $\alpha$ tests.
That is, one of the first two (wild bootstrap) tests should be used whenever a large record of observations is given.
However, the sample sizes $n_1 = n_2 = 200$ are not large enough to see this difference in the present set-up.
On the other hand, $\varphi^P$ and $\varphi^B$ are more efficiently to compute by far since they do not need an additional 
Monte-Carlo step to calculate critical values. 
{\color{black}It shall be noted, that we plan to provide an \verb=R=-package containing the forementioned procedures within a larger cooperation.}


\section*{Acknowledgements}

The authors like to thank Artur Allignol, Jan Beyersmann, Tobias Bluhmki and Edgar Brunner for helpful discussions. 
Moreover, both authors would like to thank the support received by the SFF grant F-2012/375-12.

%
\section{Appendix}
%

We start to state an auxiliary result for the uniform convergence of $\hat \zeta_{n_1,n_2}$ of~\eqref{eq:zeta_hat} in probability.
This fact will be exploited to construct consistent estimators for the parameters $f, g$ and $\kappa$ from the Box and Pearson approximative tests.
\begin{lemma}
  \label{lemma:uniform_conv2}
 Let $X_n, n \geq 0,$ be a sequence of random elements 
 in the Skorohod space $\mac{D} ([0,\tau]^2)$
 and let $X_0$ be continuous and non-random.
 If, for all arguments, all $X_n$ almost surely have
the same monotonic behaviour (i.e., monotonically increasing or decreasing) and if we have convergence in probability $X_n(t) \oPo X_0(t)$
 for all $t$ in a dense subset $E^2 \subseteq [0,\tau]^2$,
 then uniform convergence in probability follows:
 \begin{align*}
  \sup_{t \in [0,\tau]^2} |X_n(t) - X_0(t)| \oPo 0
 \end{align*}
 The case with an arbitrary, finite number of arguments can be dealt with similarly.
 \end{lemma}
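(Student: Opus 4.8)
The plan is to prove this as a Pólya–Dini-type theorem, upgrading the classical deterministic fact (coordinatewise monotone functions converging pointwise to a continuous limit on a compact set converge uniformly) both to convergence in probability and from one to two coordinates. The two structural ingredients are that compactness of $[0,\tau]^2$ together with continuity of $X_0$ yields \emph{uniform} continuity of the limit, and that monotonicity lets me sandwich the value of each $X_n$ at an arbitrary point between its values at two neighbouring grid points. Concretely, I first fix $\epsilon > 0$; since $X_0$ is continuous on the compact square it is uniformly continuous, so there is $\delta > 0$ with $|X_0(s) - X_0(t)| < \epsilon$ whenever $\|s - t\| < \delta$. Using density of $E$ in $[0,\tau]$ I select finitely many grid points $a_0 < a_1 < \dots < a_m$ in $E$ with consecutive gaps below $\delta/\sqrt{2}$ and approximating the endpoints to within $\delta$, producing a finite grid $\{(a_k,a_l)\} \subset E^2$ of mesh $< \delta$ covering the square.

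The core step is the sandwich. Assuming without loss of generality that every $X_n$ is coordinatewise nondecreasing (any other fixed monotonicity pattern, shared across $n$ by hypothesis, reduces to this one by reflecting the offending coordinate via $s \mapsto \tau - s$), for a point $t = (t_1,t_2)$ lying in a grid cell $[a_i,a_{i+1}] \times [a_j,a_{j+1}]$ monotonicity gives
\[
 X_n(a_i,a_j) \le X_n(t) \le X_n(a_{i+1},a_{j+1}), \qquad X_0(a_i,a_j) \le X_0(t) \le X_0(a_{i+1},a_{j+1}),
\]
while uniform continuity gives $X_0(a_{i+1},a_{j+1}) - X_0(a_i,a_j) < \epsilon$. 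Combining the two chains yields the bound
\[
 |X_n(t) - X_0(t)| \le \max_{k,l} |X_n(a_k,a_l) - X_0(a_k,a_l)| + \epsilon ,
\]
whose right-hand side is independent of $t$, so taking the supremum over $t \in [0,\tau]^2$ leaves it intact.

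The maximum now runs over the \emph{finite} grid, every node of which lies in $E^2$, so each summand tends to $0$ in probability by assumption, and a finite maximum of such terms does too. Hence for every $\eta > 0$,
\[
 P\Big( \sup_{t \in [0,\tau]^2} |X_n(t) - X_0(t)| > \epsilon + \eta \Big) \le P\Big( \max_{k,l} |X_n(a_k,a_l) - X_0(a_k,a_l)| > \eta \Big) \longrightarrow 0
\]
as $n \to \infty$. Since $\epsilon > 0$ was arbitrary, $\sup_{t} |X_n(t) - X_0(t)| \oPo 0$ follows. The case of an arbitrary finite number of arguments is identical, with cubes replacing squares in the grid.

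I expect the main obstacle to be the boundary of the domain: because $0$ and $\tau$ need not belong to $E$, the lower sandwich bound can fail in the extreme left/bottom cells, where no grid node sits below $t$. I would handle this either by invoking right-continuity of the Skorohod paths to transfer the convergence on $E$ down to the endpoints, or --- as happens in the intended application to $\hat\zeta_{n_1,n_2}$, whose integrals from $0$ vanish on the lower-left edges --- by noting that there $X_n$ and $X_0$ coincide deterministically, so those cells contribute nothing to the supremum.
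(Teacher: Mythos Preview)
Your proposal is correct and follows essentially the same P\'olya--Dini sandwich argument as the paper: reduce to the nondecreasing case, lay down a finite grid from $E^2$ fine enough that $X_0$ varies by at most $\varepsilon$ across each cell, and use monotonicity to trap $X_n(t)-X_0(t)$ between grid errors. The only noteworthy difference is in how convergence in probability is handled: the paper invokes the subsequence principle (pass to a subsubsequence along which the finitely many grid values converge almost surely, conclude the uniform bound almost surely, then transfer back), whereas you argue directly that a finite maximum of $o_p(1)$ terms is $o_p(1)$. Both are valid; your route is slightly more direct. Your explicit acknowledgement of the boundary issue (that $0,\tau$ need not lie in $E$) is actually more careful than the paper, which silently writes $0=t_1^{(\ell)}$ and $t_m^{(\ell)}=\tau$ as if they were grid points.
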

\noindent {\it Proof.} 
  Without loss of generality let the processes $X_n$ be non-decreasing in all arguments. For each $\varepsilon > 0$ 
  we divide $[0,\tau]^2$ into rectangles with edges $(t_j^{(1)}, t_k^{(2)}) \in E^2, j,k = 1, \dots, m,$
  where $0 = t_1^{(\ell)} < t_2^{(\ell)} < \dots < t_m^{(\ell)} = \tau, \ell = 1,2,$
  such that 
  \begin{align*}
   | X_0(t_j^{(1)}, t_k^{(2)}) - X_0(t_{j-1}^{(1)}, t_k^{(2)}) |
    \vee | X_0(t_{k}^{(1)}, t_j^{(2)}) - X_0(t_{k}^{(1)}, t_{j-1}^{(2)}) | \leq \frac{\varepsilon}{6}
  \end{align*}
  holds
  for all $2 \leq j \leq m, 1 \leq k \leq m$.
  By the subsequence principle, let $(n') \subseteq \mb{N}$ be an arbitrary subsequence and
  choose a common subsequence $(n'') \subseteq \mb{N}$ such that
  the following inequalities are almost surely true for all members of the subsequence and for all $j,k$: 
  \begin{align*}
    |X_{n''}(t_j^{(1)}, t_k^{(2)}) - X_0(t_j^{(1)}, t_k^{(2)})| < \frac{\varepsilon}{6}.
  \end{align*}
  Then, the postulated monotonicity and another application of the subsequence principle
  yield the asserted convergence: Let $t=(t^{(1)},t^{(2)}) \in [0,\tau]^2$ and fix $j,k$ giving
  $t_{j-1}^{(1)} \leq t^{(1)} \leq t_j^{(1)}$ and $t_{k-1}^{(2)} \leq t^{(2)} \leq t_k^{(2)}$, then
  \begin{align*}
    & |X_{n''}(t) - X_0(t)| \leq |X_{n''}(t_j^{(1)},t_k^{(2)}) - X_{0}(t_{j-1}^{(1)},t_{k-1}^{(2)})| \\
    & \qquad  + |X_{n''}(t_{j-1}^{(1)},t_{k-1}^{(2)}) - X_0(t_j^{(1)},t_k^{(2)})| \\
    & \quad \leq |X_{n''}(t_j^{(1)},t_k^{(2)}) - X_{0}(t_j^{(1)},t_k^{(2)})|
      + |X_{n''}(t_{j-1}^{(1)},t_{k-1}^{(2)}) - X_{0}(t_{j-1}^{(1)},t_{k-1}^{(2)})| \\
    & \qquad  + 2 |X_{0}(t_j^{(1)},t_k^{(2)}) - X_{0}(t_{j-1}^{(1)},t_{k-1}^{(2)})|
      \leq \frac{\varepsilon}{6} + \frac{\varepsilon}{6} + 4 \frac{\varepsilon}{6} = \varepsilon.
  \end{align*}
  \hfill $\Box$

\begin{cor}
\label{cor:unif_conv_zeta_hat}
  Let $t < \tau$,
  then $\hat \zeta_{n_1,n_2}$ from~\eqref{eq:zeta_hat} converges uniformly on $[0,t]^2$
  to the covariance function~\eqref{eq:zeta_V} of the Gaussian process $V$  
  in probability, as $n \rightarrow \infty$ and $\frac{n_1}{n} \rightarrow p \in (0,1)$.
\end{cor}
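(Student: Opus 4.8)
The plan is to reduce everything to a single sample via the Welch-type decomposition~\eqref{eq:zeta_hat}: since $\frac{n_2}{n}\to 1-p$ and $\frac{n_1}{n}\to p$, it suffices to prove that each summand $\hat\zeta^{(k)}_{n_k}$ from~\eqref{eq: zeta_hat_k} converges uniformly in probability on $[0,t]^2$ to $\zeta_U^{(k)}$ of~\eqref{eq: covU}; the asserted convergence to $\zeta_V$ of~\eqref{eq:zeta_V} then follows by taking the corresponding convex combination. To handle one fixed $k$ I would invoke Lemma~\ref{lemma:uniform_conv2}, which demands three ingredients: continuity of the non-random limit, pointwise convergence in probability on a dense subset, and coordinatewise monotonicity of the approximating processes with a direction that is constant in $n_k$.

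First I would record continuity of the limit. As $F_1^{(k)}, S_2^{(k)}$ are continuous and $y^{(k)}$ is bounded away from zero on $[0,t]$ by~\eqref{eq:at_risk_conv}, the map $(s_1,s_2)\mapsto\zeta_U^{(k)}(s_1,s_2)$ in~\eqref{eq: covU} is continuous (the two-variable dependence enters only through the continuous upper limit $s_1\wedge s_2$ and the continuous factors $F_1^{(k)}(s_1),F_1^{(k)}(s_2)$), hence so is $\zeta_V$. Second, for pointwise convergence I would use uniform consistency of the Aalen-Johansen and Kaplan-Meier estimators, $\hat F_1^{(k)}\oPo F_1^{(k)}$ and $\hat S_2^{(k)}\oPo S_2^{(k)}$ uniformly, together with~\eqref{eq:at_risk_conv} and $\inf y^{(k)}>0$. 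Writing $(Y^{(k)})^2=n_k^2(Y^{(k)}/n_k)^2$ turns $\hat\zeta^{(k)}_{n_k}$ into integrals against $N_j^{(k)}/n_k$; since $N_j^{(k)}/n_k$ differs from its compensator $\int_0^\cdot(Y^{(k)}/n_k)\alpha_j^{(k)}\,du$ by a martingale vanishing uniformly in probability (Lenglart's inequality, the predictable variation being of order $1/n_k$), and since the integrands converge uniformly, each integral converges in probability to the matching integral in~\eqref{eq: covU}. This gives $\hat\zeta^{(k)}_{n_k}(s_1,s_2)\oPo\zeta_U^{(k)}(s_1,s_2)$ at every $(s_1,s_2)$, in particular on a dense subset.

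The genuine obstacle is the monotonicity hypothesis: $\hat\zeta^{(k)}_{n_k}$ is \emph{not} coordinatewise monotone, so Lemma~\ref{lemma:uniform_conv2} does not apply to it directly. The key observation that resolves this is that $\hat F_1^{(k)}(s_1),\hat F_1^{(k)}(s_2)$ do not depend on the integration variable and can be pulled out. Expanding both products in~\eqref{eq: zeta_hat_k} gives, for each risk $j=1,2$, three terms whose $s_1,s_2$-dependence separates: $\int_0^{s_1\wedge s_2}(\cdot)^2\,d\hat A_j$, the term $-[\hat F_1^{(k)}(s_1)+\hat F_1^{(k)}(s_2)]\int_0^{s_1\wedge s_2}(\cdot)\,d\hat A_j$, and $\hat F_1^{(k)}(s_1)\hat F_1^{(k)}(s_2)\int_0^{s_1\wedge s_2}d\hat A_j$, where $d\hat A_j=n_k(Y^{(k)})^{-2}\,dN_j^{(k)}$ is a nonnegative (hence non-decreasing) empirical measure and the integrands $(\hat S_2^{(k)})^2,\hat S_2^{(k)},1$ (respectively $(\hat F_1^{(k)})^2,\hat F_1^{(k)},1$) are nonnegative, using $\hat S_2^{(k)}=1-\hat F_2^{(k)}\in[0,1]$ and $\hat F_1^{(k)}\in[0,1]$. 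Each of these six summands is, up to a fixed sign, a product of nonnegative factors individually non-decreasing in each of $s_1,s_2$ --- the maps $s\mapsto\hat F_1^{(k)}(s)$ are non-decreasing, and $(s_1,s_2)\mapsto\int_0^{s_1\wedge s_2}(\cdot)\,d\hat A_j$ is coordinatewise non-decreasing because $s_1\wedge s_2$ increases in each argument against a nonnegative integrand. A product of nonnegative coordinatewise non-decreasing c\`adl\`ag functions is again coordinatewise non-decreasing and lies in $\mac{D}([0,t]^2)$, and its sign is constant in $n_k$, so each summand meets the monotonicity requirement.

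Finally, I would apply Lemma~\ref{lemma:uniform_conv2} to each of the six signed summands separately: continuity of its limit and pointwise convergence in probability were established above, and monotonicity was just verified, so each converges uniformly in probability on $[0,t]^2$. Summing the finitely many uniformly convergent pieces yields $\sup_{[0,t]^2}|\hat\zeta^{(k)}_{n_k}-\zeta_U^{(k)}|\oPo0$ for $k=1,2$, and combining through~\eqref{eq:zeta_hat} with $\frac{n_1}{n}\to p$ completes the proof. Beyond the application of Lemma~\ref{lemma:uniform_conv2}, the only point needing care is bookkeeping the signs and checking that every separated factor is genuinely nonnegative; the analytic content is carried entirely by that lemma.
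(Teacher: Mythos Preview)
Your proof is correct and follows essentially the same route as the paper: reduce to a single sample, expand each product integral in~\eqref{eq: zeta_hat_k} into three terms with separated $(s_1,s_2)$-dependence, establish pointwise convergence of the integrals, and invoke Lemma~\ref{lemma:uniform_conv2} on the monotone pieces. The only cosmetic difference is that the paper applies the lemma to the bare integrals $\int_0^{s_1\wedge s_2}(\cdot)\,d\hat A_j$ (handling the Aalen--Johansen prefactors via their own uniform convergence) while you apply it to the full products; both are equally valid.
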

\begin{proof}
  It suffices to prove consistency of $\zeta_{n_k}^{(k)}, k=1,2,$ defined in \eqref{eq: zeta_hat_k}.
Due to similarity, we focus on the first integral which can be decomposed as
  \begin{align*}
    & n_k \int_0^{s_1 \wedge s_2} \frac{ \{ \hat S_2^{(k)}(u) - \hat F_1^{(k)}(s_1) \} 
      \{ \hat S_2^{(k)}(u) - \hat F_1^{(k)}(s_2) \} } {(Y^{(k)})^2(u)} \d N_1^{(k)}(u) \\
    &  = n_k \int_0^{s_1 \wedge s_2} \frac{ (\hat S_2^{(k)})^2 } {(Y^{(k)})^2} \d N_1^{(k)}
     - (\hat F_1^{(k)}(s_1) + \hat F_1^{(k)}(s_2)) n_k \int_0^{s_1 \wedge s_2} \frac{ \hat S_2^{(k)} }
     {(Y^{(k)})^2} \d N_1^{(k)} \\
    & \qquad + \hat F_1^{(k)}(s_1) \hat F_1^{(k)}(s_2) n_k \int_0^{s_1 \wedge s_2} \frac{ \d N_1^{(k)} } {(Y^{(k)})^2}.
  \end{align*}
  The CIFs in the above expression converge uniformly in probability, see Andersen et al. (1993). 
  With arguments similar to those presented in Beyersmann et al. (2013) for the convergence of the covariance estimator 
  in probability, it can be shown that, for all fixed $r,s$, all of the above integrals 
  converge in probability to their real counterparts
  \begin{align*}
   \int_0^{r \wedge s} \frac{ (S_2^{(k)})^h(u) \alpha_{1}^{(k)}(u) } {y^{(k)}(u)} \d u, \; h=0,1,2.
  \end{align*}
  Thus, an application of Lemma~\ref{lemma:uniform_conv2} concludes this proof.
\end{proof}
\noindent {\it Proof of Theorem~\ref{thm:test statistics}.}
The stated convergences of both test statistics are direct consequences of the continuous mapping theorem and Theorem~\ref{thm:sqrt_n_conv_cifs}. 
Moreover, the representation of $T^{CvM}$ as a weighted sum of $\chi^2$-distributed random variables is a consequence of Mercer's Theorem; see e.g., Theorem~3.15 in Adler (1990). \nocite{adler90}
However, for sake of completeness we shortly outline its proof. 
Note first, that by turning to $\rho_2^{1/2} V$ instead of $V$ we can without loss of generality assume that $\rho_2 \equiv 1$ holds since $\rho_2$ is continuous. 
Now denote all (normalized) eigenfunctions and eigenvalues of the integral equation
    \begin{align} \label{eq:eigen}
    \int_{t_1}^{t_2} \zeta(u,s) e(s) \d s = \lambda e(u) \quad \text{for all} \quad u \in [t_1,t_2]
    \end{align}
    by $(e_j)_j$ and $(\lambda_j)_j$, respectively.
    That is, $\int_{t_1}^{t_2} e_{i}(s) e_{j}(s) \d s = \delta_{ij}$, where $\delta_{ij} = \mathbf{1}\{i=j\}$ denotes Kronecker's delta.
    Mercer's Theorem then implies that the covariance function $\zeta_V$ admits a decomposition as
    \begin{align} \begin{split} \label{eq:mercer}
      \zeta_V(s_1,s_2) = \sum_{j=1}^\infty \lambda_j e_j(s_1) e_j(s_2),
    \end{split} \end{align}
    where the convergence is absolute and uniform on $[t_1,t_2]^2$.
    Now the Karhunen-Lo\`{e}ve Theorem (by combining Theorems~3.7 and~3.16 in Adler, 1990) states that $V$ admits the expansion
    \begin{align}
    \label{eq:karhunen_loeve}
    V(s) = \sum_{j=1}^\infty \lambda_j^{1/2} Z_j e_j(s)
    \end{align}
    where the $Z_j$ are i.i.d. standard normally distributed
    and the equality is understood to be equality in law. 
    Due to the finiteness of all integrals and sums ($\sum_{j=1}^\infty \lambda_j = \int \zeta_V(s,s) \d s  <\infty$  by monotone convergence), 
    we can change the order of integration in $\int_{t_1}^{t_2} V^2(u) \d u $ with the help of Fubini's theorem,
    use the orthonormality of $(e_j)_j$ and arrive at the desired representation.
\hfill$\Box$
\begin{proof}[Proof of Theorem~\ref{thm:box_approx}]
It is sufficient to prove consistency of $\hat \mu_{n_1,n_2}$ and $\hat \sigma_{n_1,n_2}^2$ 
    for $\mu$ and $\sigma^2$, respectively. 
The consistency of $\hat \mu_{n_1,n_2}$ for $\int_{t_1}^{t_2} \zeta_V(s,s) \d s  = \sum_{j=1}^\infty \lambda_j =\mu$ follows directly from the uniform convergence of $\hat \zeta_{n_1,n_2}$ in probability stated in Corollary~\ref{cor:unif_conv_zeta_hat}. 
For $\hat \sigma_{n_1,n_2}^2$, remark that the Decomposition~\eqref{eq:mercer}, Fubini's Theorem, the orthonormality of $(e_j)_j$ and the dominated convergence theorem yield
    \begin{align*}
    Var(Q) & = Var \Big( \sum_{j=1}^\infty \lambda_j Z_j^2 \Big) 
    = 2\sum_{j=1}^\infty \lambda_j^2 \\
    & = 2\sum_{i,j} \lambda_i \lambda_j \left( \int_{t_1}^{t_2} e_i(s) e_j(s) \d s \right)^2 \\
    & = 2\int_{[t_1,t_2]^2} \zeta_V^2(s_1,s_2) \d \leb^2 (s_1,s_2), 
    \end{align*}
    where the applicability of the theorems is justified by the following bound (obtained from Cauchy-Schwarz and monotone convergence)
    \begin{align*}
    \int_{[t_1,t_2]^2} \zeta_V^2(s_1,s_2) \d \leb^2 (s_1,s_2)
    & \leq \int_{[t_1,t_2]^2} \Big( \sum_{j=1}^\infty \lambda_j |e_j(s_1) e_j(s_2)| \Big)^2 \d \leb^2 (s_1,s_2) \\
    & \leq \int_{[t_1,t_2]^2} \Big( \sum_{j=1}^\infty \lambda_j e_j^2(s_1) \Big)
	\Big( \sum_{j=1}^\infty \lambda_j e_j^2(s_2) \Big) \d \leb^2 (s_1,s_2) \\
    & = \Big( \sum_{j=1}^\infty \lambda_j \Big)^2 < \infty.
    \end{align*}
As for $\hat \mu_{n_1,n_2}$, the consistency of $\hat \sigma_{n_1,n_2}^2$ for $2 \int_{[t_1,t_2]^2} \zeta_V^2(s_1,s_2) \d \leb^2 (s_1,s_2) = 2 \sum_{j=1}^\infty \lambda_j^2 =\sigma^2$ follows which completes the proof.
\end{proof}

\begin{proof}[Proof of Theorem~\ref{thm:pearson_approx}]
As above we may assume $\rho_2 \equiv 1$ without loss of generality.
Recall that the skewness of $\chi_\kappa^2$, i.e., a $\Gamma(\kappa/2,2)$-gamma distribution, is given by $\sqrt{\frac{8}{\kappa}}$. 
Moreover, it follows from the independence of $Z_i$ and $Z_j$, $i \neq j$, that the skewness of $Q_{\textnormal{stud}}$ equals $\sigma^{-3}$ times
    \begin{align*}
      \E[(Q - \E[Q])^3] &
      = \E \Big[ \Big(\sum_{j=1}^\infty \lambda_j (Z_j^2 - 1) \Big)^3 \Big] \\
      & = \sum_{i,j,k} \lambda_i \lambda_j \lambda_k \E[(Z_i^2 - 1)(Z_j^2 - 1)(Z_k^2 - 1)] \\
      & = \sum_{j=1}^\infty \lambda_j^3 \E[(Z_j^2 - 1)^3] = 8 \sum_{j=1}^\infty \lambda_j^3 .
    \end{align*}
Divided by 8 this equals $\sum_{i,j,k} \lambda_i \lambda_j \lambda_k \delta_{ik} \delta_{ij} \delta_{jk}$
which can be rewritten by Mercer's Theorem as
\begin{align*}
 & \sum_{i,j,k} \lambda_i \lambda_j \lambda_k \int_{t_1}^{t_2} e_i(s_1) e_k(s_1) \d s_1 \int_{t_1}^{t_2} e_i(s_2) e_j(s_2) \d s_2 \int_{t_1}^{t_2} e_j(s_3) e_k(s_3) \d s_3 \\
 & = \int_{[t_1,t_2]^3} \sum_{i=1}^\infty \lambda_i e_i(s_1) e_i(s_2) \sum_{j=1}^\infty \lambda_j e_j(s_2) e_j(s_3) \sum_{k=1}^\infty \lambda_k e_k(s_3) e_k(s_1) \d \leb^3(s_1,s_2,s_3) \\
 & = \int_{[t_1,t_2]^3} \zeta_V(s_1,s_2) \zeta_V(s_2,s_3) \zeta_V(s_3,s_1) \d \leb^3(s_1,s_2,s_3);
\end{align*}
see also the monograph of Shorack and Wellner (2009), \nocite{shorack09} the equation following~5.2.(20) therein.
The justification for the exchangeability of the above sums and integrals is given in the same manner as in the previous proof.
Equating these quantities it follows that $\kappa$ should equal $(\sum_{j=1}^\infty \lambda_j^2)^3/(\sum_{j=1}^\infty \lambda_j^3)^{2}$. In particular, this choice also guarantees equality of the first two moments of $Q_{\textnormal{stud}}$ and $\chi^2_{\kappa,\textnormal{stud}}$. 
Now, as proven in Theorem~\ref{thm:box_approx}, $\frac12 \hat \sigma_{n_1,n_2}^2$ is a consistent estimator for $\sum_{j=1}^\infty \lambda_j^2$. Moreover by Corollary~\ref{cor:unif_conv_zeta_hat}, $\hat \gamma_{n_1,n_2}$
is consistent for $\sum_{j=1}^\infty \lambda_j^3$.
All in all, this shows that $\hat \kappa$ is consistent for $\kappa$.
\end{proof}

\section{Supplement to the Simulation Section~\ref{sec: sim}}
\label{sec: appA}

{\color{black}Supplementing the first two simulation scenarios we here present a third one which has been adopted from Dobler and Pauly (2014)\nocite{dobler14}:}
\begin{enumerate}
 \item[3.] The event times are given by the cause-specific hazard intensities
  \begin{align*}
    \alpha_{1}^{(1)}(u) = \exp(-u), \quad \alpha_{2}^{(1)}(u) = 1-\exp(-u) \quad \text{and} \quad
    \alpha_{1}^{(2)} \equiv c \equiv 2-\alpha_{2}^{(2)}, 
  \end{align*}
  where $0\leq c\leq 1$.
  The case $c=1$ is equivalent to the presence of the null hypothesis $H_=$,
  whereas both CIFs for the first competing risk are located deeper 
  in the alternative hypothesis $H_{\neq}$ as $c<1$ decreases. 
  The examined sample sizes are the same as {\color{black}in the first two simulation scenarios given in the paper}
  and the domain of interest equals $[t_1,t_2] = [0,1.5]$.
  The data are independently right-censored by different exponential $Exp(\lambda^{(k)})$-distributions
  with pdfs $f^{(k)}(x) = \lambda^{(k)} \exp(-\lambda^{(k)} x) {\bf 1}_{(0,\infty)}(x)$.
  $\lambda^{(k)} = 0.5, 1$ corresponds to light and moderate censoring, respectively,
  and $\lambda^{(k)}=0$ indicates the uncensored case.
\end{enumerate}

\iftrue

The simulated effective type-I error probabilities of the resampling tests $\varphi^{KS},\varphi^{CvM},\varphi^{Pepe}$
as well as those of the approximative tests $\varphi^P$ and $\varphi^B$ in {\color{black}this} third set-up
can be found in Table~\ref{table:niveau_DP}.
Since the Kolmogorov-Smirnov test is the most liberal one by far (except the case $n_1=n_2=200$)
and it therefore obviously has the greatest power (cf. Tables~\ref{table:power_BK_1} and~\ref{table:power_BK_2}), 
its results from the simulations for assessing the power behaviour are not presented in Table~\ref{table:power_DP} {\color{black}below}.
The remaining tests wrongly reject the null hypothesis $H_=$ with more acceptable rates
-- in fact, the sizes of the tests based on the Cram\'er-von Mises statistic 
do not differ very much among one another, which already has been seen in Table~\ref{table:niveau_BK_1} in a different set-up.
On the one hand, all three tests $\varphi^{CvM}$, $\varphi^P$ and $\varphi^B$ are slightly too liberal 
when censoring or considerably unequal sample sizes are present.
This observation contradicts our expectation 
that the approximative tests are constructed by means of conservative critical values.
On the other hand, however, the prescribed level $\alpha = 0.05$ is maintained excellently
for uncensored and equally sized sample groups even for small sample sizes such as $n_1 = n_2 = 50$.
The Pepe-type test keeps the nominal level best by far, it can even handle extremely small samples as well as moderate censoring.

Let us now consider the simulated power of $\varphi^{CvM},\varphi^P,\varphi^B$ and $\varphi^{Pepe}$.
Therefore, we have chosen the CIFs of the second group corresponding to the parameters $c=0.9, 0.8, \dots, 0.1$
and we have only considered the cases where $n_1 = n_2 \in \{50, 100\}$ and $\lambda^{(1)} = \lambda^{(2)} \in \{0,1\}$.
As usual the power increases as the distance to the null hypothesis grows. 
Further, it strikes the eye that both approximative tests $\varphi^P$ and $\varphi^B$ share the same power in most cases under consideration. 
Since they also keep the level $\alpha = 0.05$ nearly equally well,
there is no clear preference for one of both tests. 
When compared to the wild bootstrap test,
we see that $\varphi^{CvM}$ in many cases has the highest power (differences up to $.01$)
whereas in some cases the approximative tests are superior (differences up to $.004$).
{\color{black}To sum up,} all three tests show a comparable behaviour under $H_=$,
The Pepe-type test $\varphi^{Pepe}$ not only keeps the prescribed level excellently,
it also has the highest power of all presented tests.
{\color{black}However, this behaviour is dearly bought with the lack of power in situations such as the first two simulation scenarios reflect;
see Remark~\ref{rem:test statistics} as well as the discussion in Section~\ref{sec: sim}.}

\begin{sidewaystable}
\begin{center}
\setlength{\tabcolsep}{5pt}
\begin{tabular}[h]{cc|c|c|c|c|c|c|c|c|c|c|c|c|c|c|c}
\hline
 & \multicolumn{1}{r|}{$(n_1,n_2)$} & \multicolumn{5}{|c|}{(20,20)} & \multicolumn{5}{|c|}{(50,50)} & \multicolumn{5}{|c}{(50,100)} \\
\multicolumn{2}{c|}{$(\lambda^{(1)},\lambda^{(2)})$}  & $\varphi^{KS}$ & $\varphi^{CvM}$ & $\varphi^{P}$ & $\varphi^{B}$ & $\varphi^{Pepe}$ & $\varphi^{KS}$ & $\varphi^{CvM}$ & $\varphi^{P}$ & $\varphi^{B}$ & $\varphi^{Pepe}$ & $\varphi^{KS}$ & $\varphi^{CvM}$ & $\varphi^{P}$ & $\varphi^{B}$ & $\varphi^{Pepe}$ \\\hline
\multicolumn{2}{c|}{(0,0)}  & .091 & .078 & .080 & .080 & \bf .064 & .092 & \bf .051 & .048 & .048 & .064 & \bf .053 & \bf .053 & .054 & .054 & \bf .053\\
\multicolumn{2}{c|}{(0.5,1)} & .130 & .094 & .086 & .087 & \bf .067 & .080 & .068 & .071 & .071 & \bf .047 & .074 & .057 & \bf .053 & \bf .053 & .066 \\
\multicolumn{2}{c|}{(1,0.5)} & .128 & .095 & .088 & .089 & \bf .077 & .079 & .061 & .061 & .061 & \bf .046 & .080 & .068 & .066 & .067 & \bf .055 \\
\multicolumn{2}{c|}{(1,1)} & .135 & .107 & .107 & .107 & \bf .078 & .090 & .077 & .073 & .073 & \bf .054 & .073 & .067 & .066 & .066 & \bf .049 \\
\hline
 & \multicolumn{1}{r|}{$(n_1,n_2)$} & \multicolumn{5}{|c|}{(100,50)} & \multicolumn{5}{|c|}{(100,100)} & \multicolumn{5}{|c}{(200,200)} \\
\multicolumn{2}{c|}{$(\lambda^{(1)},\lambda^{(2)})$}  & $\varphi^{KS}$ & $\varphi^{CvM}$ & $\varphi^{P}$ & $\varphi^{B}$ & $\varphi^{Pepe}$ & $\varphi^{KS}$ & $\varphi^{CvM}$ & $\varphi^{P}$ & $\varphi^{B}$ & $\varphi^{Pepe}$ & $\varphi^{KS}$ & $\varphi^{CvM}$ & $\varphi^{P}$ & $\varphi^{B}$ & $\varphi^{Pepe}$ \\\hline
\multicolumn{2}{c|}{(0,0)} & .073 & \bf .057 & .058 & .058 & .059 & .059 & \bf .051 & \bf .051 & \bf .051 & .058 & .044 & .057 & \bf .051 & \bf .051 & .053 \\
\multicolumn{2}{c|}{(0.5,1)} & .108 & .057 & \bf .053 & \bf .053 & .074 & .063 & .074 & .072 & .072 & \bf .055 & .081 & .063 & \bf .061 & \bf .061 & .064 \\
\multicolumn{2}{c|}{(1,0.5)} &.067 & .056 & .056 & .056 & \bf .052 & .068 & .064 & .068 & .069 & \bf .061 & .057 & \bf .055 & .058 & .059 & .039 \\
\multicolumn{2}{c|}{(1,1)} &.099 & \bf .063 & .064 & .064 & .072 & .081 & .070 & \bf .067 & \bf .067 & \bf .067 & .052 & .066 & .063 & .063 & \bf .051
\end{tabular}
\caption{(Model of Dobler and Pauly (2014)) Simulated sizes of the resampling tests $\varphi^{KS}, \varphi^{CvM}, \varphi^{Pepe}$
  and the approximative tests $\varphi^P, \varphi^B$ for nominal size $\alpha=5\%$
  under different sample sizes and censoring distributions under $H_=$.}
\label{table:niveau_DP}
\end{center}
\end{sidewaystable}


\begin{sidewaystable}
\begin{center}
\setlength{\tabcolsep}{5pt}
\begin{tabular}[h]{c|c|c|c|c|c|c|c|c|c|c|c|c|c|c|c|c}
\hline
 \multicolumn{1}{r|}{$(n_1,n_2)$} & \multicolumn{8}{|c|}{(50,50)} & \multicolumn{8}{|c}{(100,100)}  \\
 \multicolumn{1}{r|}{$(\lambda^{(1)},\lambda^{(2)})$}  & \multicolumn{4}{|c|}{(0,0)} & \multicolumn{4}{|c|}{(1,1)} & \multicolumn{4}{|c|}{(0,0)} & \multicolumn{4}{|c}{(1,1)}\\
c  & $\varphi^{CvM}$ & $\varphi^{P}$ & $\varphi^{B}$ & $\varphi^{Pepe}$ & $\varphi^{CvM}$ & $\varphi^{P}$ & $\varphi^{B}$ & $\varphi^{Pepe}$ & $\varphi^{CvM}$ & $\varphi^{P}$ & $\varphi^{B}$ & $\varphi^{Pepe}$ & $\varphi^{CvM}$ & $\varphi^{P}$ & $\varphi^{B}$ & $\varphi^{Pepe}$ \\\hline
0.9 & .083 & .085 & .085 & .111 & .080 & .080 & .080 & .110 & .093 & .096 & .096 & .162 & .104 & .103 & .103 & .152 \\
0.8 & .166 & .160 & .160 & .249 & .140 & .140 & .140 & .214 & .239 & .239 & .239 & .352 & .206 & .210 & .210 & .294 \\
0.7 & .305 & .297 & .297 & .392 & .228 & .229 & .229 & .352 & .490 & .485 & .485 & .597 & .387 & .382 & .382 & .520 \\
0.6 & .492 & .485 & .485 & .573 & .388 & .391 & .391 & .495 & .772 & .773 & .773 & .824 & .625 & .623 & .623 & .719 \\
0.5 & .674 & .671 & .671 & .761 & .541 & .538 & .538 & .663 & .926 & .928 & .928 & .946 & .814 & .808 & .808 & .891 \\
0.4 & .840 & .844 & .842 & .892 & .707 & .704 & .704 & .834 & .981 & .981 & .981 & .997 & .934 & .933 & .933 & .967 \\
0.3 & .949 & .949 & .949 & .978 & .871 & .861 & .861 & .921 & .999 & .999 & .999 & 1 & .991 & .989 & .989 & .998 \\
0.2 & .989 & .989 & .989 & .994 & .949 & .950 & .950 & .970 & 1 & 1 & 1 & 1 & .999 & .999 & .999 & 1 \\
0.1 & 1 & 1 & 1 & 1 & .993 & .994 & .994 & .998 & 1 & 1 & 1 & 1 & 1 & 1 & 1 & 1 \\
\end{tabular}
\caption{(Model of Dobler and Pauly (2014)) Simulated power of the resampling tests $\varphi^{CvM}, \varphi^{Pepe}$
  and the approximative tests $\varphi^P, \varphi^B$ for nominal size $\alpha=5\%$ 
  under different sample sizes and censoring distributions under $H_{\neq}$.}
\label{table:power_DP}
\end{center}
\end{sidewaystable}
\fi

%

%
%

\bibliography{literatur}
\bibliographystyle{plain}

\end{document}